\newtheorem{theorem}{Theorem}[section]
\newtheorem{corollary}[theorem]{Corollary}
\newtheorem{lemma}[theorem]{Lemma}
\newtheorem{definition}[theorem]{Definition}
\newtheorem{remark}[theorem]{Remark}
\renewcommand\Re{\operatorname{Re}}
\renewcommand\Im{\operatorname{Im}}
\numberwithin{equation}{section}
\theoremstyle{definition}
\title
 [Observability inequalities]
{Uncertainty principle, minimal escape velocities and observability inequalities for schr\"{o}dinger equations}
\author{Shanlin Huang, Avy Soffer}
\address{ Shanlin Huang,  School of Mathematics and Statistics, Huazhong University of Science and Technology,  Wuhan,  430074, P.R.China  }
\email{shanlin\_huang@hust.edu.cn}
\address{ Avy Soffer, School of Mathematics and Statistics, Central China Normal University, Wuhan, 430079, P. R. China }
\address{Department of Mathematics, Rutgers University, Piscataway, 08854-8019, USA }
\email{soffer@math.rutgers.edu}
\subjclass[2000]{}
\keywords{}
\begin{document}

\maketitle
\begin{abstract}
 We develop a new abstract derivation of the observability inequalities at two points in time for Schr\"{o}dinger type equations. Our approach consists of two steps. In the first step we prove a Nazarov type uncertainty principle associated with a non-negative self-adjoint operator $H$ on $L^2(\mathbb{R}^n)$. In the second step we use results on asymptotic behavior of $e^{-itH}$, in particular, minimal velocity estimates introduced by Sigal and Soffer. Such observability inequalities are closely related to unique continuation problems as well as controllability for the Schr\"{o}dinger equation.

\end{abstract}


\section{Introduction}
In a recent paper by Wang, Wang and Zhang \cite{WWZ}, they established a new type of observability inequality at two points in time for the free Schr\"{o}dinger equation. More precisely, let $u(x, t)$ satisfy
\begin{align}\label{equ1.1}
\begin{cases}
i\partial_{t}u(x, t)+\Delta u(x, t)=0,\,\,\,(x, t)\in \mathbb{R}^n\times (0, \infty),\\
u(x, 0)=u_{0}\in L^2(\mathbb{R}^n).
\end{cases}
\end{align}
Then given any $r_1,\,r_2>0$, and $t_2>t_1\ge 0$, there exists a positive constant $C$ depending only on $n$ such that
\begin{align}\label{equ1.2}
\int_{\mathbb{R}^n}|u_0|^2\,dx\leq Ce^{C\frac{r_1r_2}{t_2-t_1}}\left(\int_{|x|\ge r_1}|u(x, t_1)|^2\,dx+\int_{|x|\ge r_2}|u(x, t_2)|^2\,dx\right), \,\,u_0\in L^2(\mathbb{R}^n).
\end{align}
The proof in \cite{WWZ} is based on the fact that in the free case,  one has the identity
\begin{align}\label{equ1.3}
(2it)^{\frac{n}{2}}e^{-i|x|^2/4t}u(x, t)=\widehat{e^{-i|\cdot|^2/4t}u_0}(x/2t),\,\,\, \text{for all}\,\,t>0,
\end{align}
where $\widehat{\cdot}$ denotes the Fourier transform. After applying \eqref{equ1.3} with a scaling argument, it's easy to see that the estimate \eqref{equ1.2} is closely related to the following Nazarov's uncertainty principle built up in \cite{Jam}: If $A, B$ are subsets of $\mathbb{R}^n$ of finite measure, then
\begin{align}\label{equ1.4}
\int_{\mathbb{R}^n}|f(x)|^2\,dx\leq C(n, A, B)(\int_{\mathbb{R}^n\setminus A}|f(x)|^2\,dx+\int_{\mathbb{R}^n\setminus B}|\hat{f}(\xi)|^2\,d\xi), \,\,\,\,f\in L^2(\mathbb{R}^n),
\end{align}
with
\begin{align}\label{equ1.3.1}
C(n, A, B)\triangleq Ce^{C\min\{|A||B|,~ |B|^{1/n}\omega(A),~ |A|^{1/n}\omega(B)\}},
\end{align}
where $\omega(A)$ denotes the mean width of $A$ (in fact, \eqref{equ1.4} implies \eqref{equ1.2} by choosing $A=\{x\in\mathbb{R}^n:\, |x|\leq r_1\}$ and $B=\{x\in\mathbb{R}^n:\, |x|\leq r_2/(t_2-t_1)$).
\par
A natural question is whether such kind of observability inequalities still hold for more general Hamiltonian. We mention that the approach in \cite{WWZ} is restricted to the free Laplacian, since the argument there is essentially relying on the formula \eqref{equ1.3}, which in turn follows from the fundamental solution  of $e^{it\Delta}$. For general $H$, no such explicit solutions are available, thus one needs to proceed differently.

The motivation of this paper is to develop an abstract approach to obtain observability inequalities at two points in time for $e^{-itH}$ under some general assumptions on $H$. Then we apply it to special cases including Schr\"{o}dinger equation with potentials and fractional Schr\"{o}dinger equations.

We first point out that \eqref{equ1.2} may fail if $H$ has eigenvalues. Indeed if $H\phi=\lambda\phi$, for some $\lambda\in \mathbb{R}$ and $\phi\in L^2$. Then $u(x, t)=e^{-i\lambda t}\phi(x)$ is a solution of the following Cauchy problem
$$
i\partial_{t}u =H u, \qquad u(0,x)=\phi(x)\in L^2(\mathbb{R}^n).
$$
After choosing $r_1=r_2=\sqrt{t_2-t_1}$ in \eqref{equ1.2}, we find that the RHS of \eqref{equ1.2} is equal to $C\int_{|x|\ge \sqrt{t_2-t_1}}{|\phi|^2\,dx}$ with some fixed constant $C$, which goes to zero as $t_2-t_1\rightarrow \infty$. Hence estimate \eqref{equ1.2} can't hold for such $\phi$. Therefore, we only expect \eqref{equ1.2} to hold for vectors lying in the continuous subspace of $H$.

We proceed to illustrate the key idea and main tools used in our approach. To simplify matters, we change the uncertainty principle \eqref{equ1.4} into a form concerning two projection operators on $L^2$, i.e., for any $r>0$, there exists some uniform constant $C>0$ such that
\begin{align}\label{equ1.5}
\|f\|_2^2\leq C\left(\|\chi(|x|\ge r)f\|_2^2+\|\chi(-\Delta\ge r^{-2})f\|_2^2\right), \,\,\,\,f\in L^2(\mathbb{R}^n).
\end{align}
where  $\chi(\Omega)$ stands for the characteristic function of $\Omega$. We mention that inequality \eqref{equ1.5}  indicates that if the initial data is localized in a ball, then its "energy" must have a positive lower bound. Actually, it's easy to see that the estimate \eqref{equ1.5} is equivalent to the following
$$
\|\chi(|x|\le r)f\|\leq C\|\chi(H\ge r^{-2})f\|,\,\,\,\text{for any}\,\, r>0.
$$

Having established this type of uncertainty principle for $H$, we can use propagation estimates, in particular {\bf minimal velocity estimates} to further study the asymptotic behavior of $e^{-itH}f$. To provide intuition in understanding of this method, let us consider the simple case $H=-\Delta$, and assume $f$ is a Schwartz function such that $f\in Ran\, \chi(H\ge \delta )$ with some $\delta>0$, thus $\hat{f}$ is smooth and $\text{supp}\,\hat{f}\subset \{\xi\in \mathbb{R}^n,\,|\xi|\ge \sqrt{\delta}\}$. Then a integration by parts argument yields
$$
\int_{\frac{|x|}{t}<\sqrt{\delta}}{|e^{-itH}f|^2\,dx}=O(t^{-m}),\,\,\,\text{as}\,\, t\rightarrow\infty
$$
for any $m>0$. In this sense, the evolution $e^{-itH}f$ is said to have a minimal  velocity $v_{min}=\sqrt{\delta}>0$. Roughly speaking, the goal of minimal velocity estimates is to obtain similar results for general Hamiltonian via an abstract way. And it's  based on choosing observable (self-adjoint operator) $A$ so that the commutator $i[H, A]$ is positive definite, see section \ref{sec2.2} for a further discussion. Such estimates are crucial in our proof, since it provides quantitative information about the rate with which the wave  $e^{-itH}f$ moves out to spacial infinity. As a comparison, we recall that the RAGE theorem (see e.g. \cite{RS}) indicates that for certain Schr\"{o}dinger operators $H=-\Delta+V$, $e^{-itH}f$ is escaping any fixed ball in a mean ergodic sense:
$$
\lim_{T\rightarrow\infty}\frac{1}{T}\int_0^T{\,dt}\int_{|x|\leq R}{|e^{-itH}f|^2\, dx}=0.
$$
We mention that minimal velocity estimates were first appeared in the work of Sigal and Soffer \cite{SS}, which turned out to be very useful in scattering theory and theory of resonances, we refer to \cite{Ski,SS90, SW, HSS} and references therein for further extensions and applications. One of the novelties in our paper is that we establish the close relationship between observability inequalities and  minimal velocity estimates for Schr\"{o}dinger type equations.

Now we turn to some applications. As is pointed out in \cite{WWZ}, the estimate \eqref{equ1.2} can be used to derive controllability for Schr\"{o}dinger equations. It is also closely related to quantitative unique continuation problems for Schr\"{o}dinger equations. In Sect. \ref{sec4}, we shall use observability inequalities built up in this paper to obtain results concerning unique continuation properties of Schr\"{o}dinger equations with potentials as well as fractional Schr\"{o}dinger equations. Such kind of results for certain linear and nonlinear Schr\"{o}dinger equations were considered by many people, see e.g. \cite{B,KS,IK,IK04,KPV,Zhang}  and references therein.

To our best knowledge, the uniqueness result concerning the fractional Schr\"{o}dinger equation obtained in this paper (see Corollary \ref{thm4.2}) is new.  We also mention that the method in this paper presents an alternative approach to prove unique continuation theorems, see Remark \ref{rmk4.3} for a detailed discussion on previous related results.  For the uncertainty principle and unique continuation inequalities for Schr\"{o}dinger equations, we would like to refer a series of paper by Escauriaza, et al. \cite{EKPV1,EKPV2,EKPV3,EKPV4}.

The rest of the paper is organized as follows. Section \ref{sec2} is divided into two subsections, where we discuss the related uncertainty principle and minimal velocity estimates.  In Sect. \ref{sec3.1}, the observability inequalities are proved based on tools established in Sect.  \ref{sec2}. Furthermore we show in Sect. \ref{sec3.2} that the observability inequalities may not hold by observing the solution at two different points in time, one time in a ball, while the other outside a ball. Section \ref{sec4} is devoted to applications to unique continuation property as well as controllability for the Schr\"{o}dinger equation.
%
%

\section{Main tools}\label{sec2}
\subsection{Uncertainty principle}\label{sec2.1}
In this subsection, we consider generalizations of both \eqref{equ1.4} and \eqref{equ1.5}. We first present an abstract version of the Nazarov type uncertainty principle for a general non-negative self-adjoint operator $H$ on $L^2(\mathbb{R}^n)$, assuming only $L^2-L^{\infty}$ decay estimates of the corresponding heat semigroup $e^{-tH}$. Then we turn to concrete examples including Schr\"{o}dinger operators $H=-\Delta+V$, as well as the fractional Laplacian ($(-\Delta)^{\frac{s}{2}}$, $s>0$). In these cases, we are able to obtain more quantitative results, which will be used in Sect. \ref{sec3}.
\subsubsection{Uniform estimates}\label{sec2.1.1}

\begin{lemma}\label{lem2.1}
Let $H$ be a non-negative self-adjoint operator on $L^2(\mathbb{R}^n)$, $n\ge 1$. Assume that there is some $\gamma>0$ such that
\begin{align}\label{equ2.1}
\|e^{-tH}\|_{L^2\rightarrow L^{\infty}}\leq Ct^{-\gamma}.
\end{align}
Then for any $R>0$, there is a constant $\delta_R>0$, such that for all $f\in L^2(\mathbb{R}^n)$
\begin{align}\label{equ2.2}
\|f\|^2\leq C\left(\|\chi(|x|\ge R)f\|^2+\|\chi(H\ge \delta_R)f\|^2\right).
\end{align}
\end{lemma}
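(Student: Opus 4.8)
The plan is to recast \eqref{equ2.2} as a statement about a pair of orthogonal projections and reduce it to a single operator-norm estimate. Write $P=\chi(|x|<R)$ for the projection onto functions supported in the ball $B_R=\{|x|<R\}$, and $Q=\chi(H<\delta_R)$ for the spectral projection onto the low-energy part of $H$. Then $I-P=\chi(|x|\ge R)$ and $I-Q=\chi(H\ge\delta_R)$ are exactly the two operators on the right-hand side of \eqref{equ2.2}, so that \eqref{equ2.2} is precisely the inequality $\|f\|^2\le C\big(\|(I-P)f\|^2+\|(I-Q)f\|^2\big)$.

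The first step I would isolate is a general fact about projections: if $P,Q$ are orthogonal projections on a Hilbert space with $\theta:=\|PQ\|<1$, then $\|f\|^2\le \tfrac{2}{(1-\theta)^2}\big(\|(I-P)f\|^2+\|(I-Q)f\|^2\big)$ for all $f$. This I would prove directly: from $Pf=PQf+P(I-Q)f$ one gets $\|Pf\|\le\theta\|f\|+\|(I-Q)f\|$, and combining with $\|f\|\le\|Pf\|+\|(I-P)f\|$ yields $(1-\theta)\|f\|\le\|(I-P)f\|+\|(I-Q)f\|$; squaring gives the claim. In particular the constant $C$ in \eqref{equ2.2} is \emph{universal} once $\theta\le 1/\sqrt2$ is arranged, with all the $R$-dependence pushed into the choice of $\delta_R$.

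Everything then comes down to producing a $\delta_R>0$ with $\|PQ\|=\|\chi(|x|<R)\,\chi(H<\delta_R)\|<1$, and this is the step where hypothesis \eqref{equ2.1} enters. For $g\in\operatorname{Ran}Q$ the spectral theorem gives $\|e^{tH}g\|\le e^{t\delta_R}\|g\|$ for every $t>0$, since the spectral support of $g$ lies in $[0,\delta_R)$. Writing $g=e^{-tH}(e^{tH}g)$ and invoking the $L^2\to L^\infty$ bound \eqref{equ2.1},
$$\|g\|_{L^\infty}\le\|e^{-tH}\|_{L^2\to L^\infty}\,\|e^{tH}g\|_{L^2}\le Ct^{-\gamma}e^{t\delta_R}\|g\|_{L^2}.$$
Estimating $g$ on the ball by its sup-norm and using the finite volume $|B_R|$,
$$\|\chi(|x|<R)g\|_{L^2}\le |B_R|^{1/2}\|g\|_{L^\infty}\le |B_R|^{1/2}Ct^{-\gamma}e^{t\delta_R}\|g\|_{L^2}.$$

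To close, I would first fix $t=t_R$ large enough that $|B_R|^{1/2}Ct_R^{-\gamma}\le 1/2$, which is possible because $t^{-\gamma}\to 0$; with $t_R$ now frozen I choose $\delta_R$ small enough that $e^{t_R\delta_R}\le\sqrt2$. Then $\|\chi(|x|<R)g\|\le\tfrac{1}{\sqrt2}\|g\|$ for all $g\in\operatorname{Ran}Q$, hence $\|PQ\|\le 1/\sqrt2<1$, and the projection lemma of the second paragraph finishes the argument. The one genuine obstacle is this middle estimate: the whole proof hinges on converting low-energy localization into an $L^\infty$ (hence local $L^2$) bound, and that conversion is exactly what the heat-semigroup decay \eqref{equ2.1} supplies — without such a smoothing/volume input a low-energy vector need not be small on a fixed ball. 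Everything else is bookkeeping of the constants $t_R$ and $\delta_R$.
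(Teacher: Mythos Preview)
Your proof is correct and, like the paper, reduces \eqref{equ2.2} to the operator-norm bound $\|\chi(|x|<R)\chi(H<\delta_R)\|<1$; the projection lemma you state is exactly the equivalence the paper invokes at the outset. The genuine difference is in how the norm bound is obtained. The paper passes from the heat estimate \eqref{equ2.1} to a resolvent-power bound $\|(H+\epsilon_0)^{-\alpha}\|_{L^2\to L^\infty}\le C\epsilon_0^{\gamma-\alpha}$ via the Laplace transform, then shows $\chi(|x|\le R)(H+\epsilon_0)^{-\alpha}$ is Hilbert--Schmidt with norm $\le CR^{n/2}\epsilon_0^{\gamma-\alpha}$, and finally factors $\chi(|x|\le R)\chi(H\le\delta_R)$ through this operator. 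You instead exploit the heat semigroup directly: for $g$ in the low-energy range, $g=e^{-tH}(e^{tH}g)$ with $\|e^{tH}g\|\le e^{t\delta_R}\|g\|$, so \eqref{equ2.1} yields an $L^\infty$ bound on $g$ and hence a local $L^2$ bound on $B_R$. Your route is more elementary---no Laplace transform, no Hilbert--Schmidt estimate---and still delivers the same quantitative dependence $\delta_R\sim R^{-n/(2\gamma)}$ (from $t_R\sim R^{n/(2\gamma)}$ and $\delta_R\sim t_R^{-1}$), matching what the paper's argument gives. The paper's approach has the minor advantage of producing a Hilbert--Schmidt (hence compact) factorization, which is occasionally useful downstream, but for the lemma as stated your argument is cleaner.
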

\begin{proof}
We first point out that \eqref{equ2.2} is equivalent to proving that there is some $C>0$ such that for any $R>0$,
\begin{align}\label{equ2.3}
\|\chi(|x|\leq R)f\|^2\leq C\|\chi(H\ge \delta_R)\chi(|x|\leq R)f\|^2, \,\,\,\,f\in L^2,
\end{align}
which in turn is  equivalent to prove that for any $R>0$, there exists a constant $\delta_R>0$,
\begin{align}\label{equ2.4}
\|\chi(|x|\leq R)\chi(H\le \delta_R)\|_{L^2\rightarrow L^2}<1.
\end{align}
In order to prove \eqref{equ2.4}, we note that it follows from \eqref{equ2.1} and Laplace transform
\begin{align*}
\|(H+\epsilon_0)^{-\alpha}\|_{L^2\rightarrow L^{\infty}}\leq& \frac{1}{\Gamma(\alpha)}\int_0^{\infty}\|e^{-tH}\|_{L^2\rightarrow L^{\infty}}e^{-t\epsilon_0}t^{\alpha-1}\,dt\\
\leq& \frac{1}{\Gamma(\alpha)}\int_0^{\infty}e^{-t\epsilon_0}t^{\alpha-1-\gamma}\,dt\\
\leq& C\epsilon_0^{\gamma-\alpha},
\end{align*}
provided $\alpha>\gamma$. Hence if we denote by $K(x, y)$ the kernel of the operator $\chi(|x|\leq R)(H+\epsilon_0)^{-\alpha}$, we deduce that
\begin{align*}
\int{|K(x, y)|^2\,dxdy}\leq& \int_{\mathbb{R}^n}{\chi(|x|\leq R)\,dx}\int_{\mathbb{R}^n}{|(H+\epsilon_0)^{-\alpha}(x, y)|^2\,dy}\\
\leq& CR^n\epsilon_0^{2(\gamma-\alpha)},
\end{align*}
which implies that $\chi(|x|\leq R)(H+\epsilon_0)^{-\alpha}$ is a Hilbert-Schmitd operator and furthermore
\begin{align}\label{equ2.5}
\|\chi(|x|\leq R)(H+\epsilon_0)^{-\alpha}\|_{L^2\rightarrow L^2}\leq CR^{\frac n2}\epsilon_0^{\gamma-\alpha}.
\end{align}
Therefore for any $R>0$, we can choose $\epsilon_0>0$ small enough, and then let $\delta_R=\epsilon_0$, we obtain
\begin{align*}
\|\chi(|x|\leq R)\chi(H\le \delta_R)\|\leq& \|\chi(|x|\leq R)(H+\epsilon_0)^{-\alpha}\|\cdot\|(H+\epsilon_0)^{\alpha}\chi(H\le \delta_R)\|\\
\leq& CR^{\frac n2}\cdot\epsilon_0^{\gamma-\alpha}\cdot(\delta_R+\epsilon_0)^{\alpha}\\
\leq& CR^{\frac n2}\epsilon_0^{\gamma}<1,
\end{align*}
which proves \eqref{equ2.4}.
\end{proof}

We mention that the result above doesn't imply sharp relationship between $R$ and $\delta_R$. However, in the case $H=-\Delta+V$ with suitable class of potentials, instead of using the heat kernel estimate \eqref{equ2.1}, we shall establish sharp results for $n\ge 3$  by relying on the limiting behavior of $(H+\epsilon)^{-1}$ as $\epsilon\rightarrow 0$.
We first recall the definition of Kato class and the related global Kato norm.

\begin{definition}
Let $n\ge 3$, a real measurable function $V(x)$ is said to lied in the Kato class if
$$
\lim_{\delta \to 0}\sup_{x\in\mathbb{R}^n}\int_{|x-y|<\delta}{\frac{|V(y)|}{|x-y|^{n-2}}\,dy}=0.
$$
Moreover, the global Kato norm of $V(x)$ is defined as
$$
\|V\|_K=\sup_{x\in\mathbb{R}^n}\int_{\mathbb{R}^n}{\frac{|V(y)|}{|x-y|^{n-2}}\,dy}.
$$
\end{definition}

Our assumption on $V$ is the following
\begin{align}\label{equ2.6}
\begin{cases}
V=V_+-V_-,\,\,\, \text{where}\,\,V_+=\max\{V, 0\}\\
V_+\,\,\text{is of Kato class},\,\,\, \|V_-\|_K<4\pi^{\frac{n}{2}}/\Gamma(\frac{n}{2}-1).
\end{cases}
\end{align}

It's known that (see \cite[Lemma 3.1]{DP}) under this assumption, $-\Delta+V$ defined on $C_0^{\infty}(\mathbb{R}^n)$ extends to a unique nonnegative self-adjoint operator. Furthermore, we shall prove
\begin{lemma}\label{lem2.2}
Let $n\ge 3$ and assume that $V$ satisfies condition \eqref{equ2.6}. Then for any $R>0$, there are uniform constants $C, \delta>0$ such that
\begin{align}\label{equ2.7}
\|f\|^2\leq C\left(\|\chi(|x|\ge R)f\|^2+\|\chi(H\ge \delta R^{-2})f\|^2\right),\,\,\, f\in  L^2(\mathbb{R}^n).
\end{align}
\end{lemma}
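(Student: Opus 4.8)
The plan is to follow the scheme of Lemma \ref{lem2.1}, but to replace the soft Laplace-transform bound on $(H+\epsilon_0)^{-\alpha}$ by a sharp, $\epsilon$-uniform estimate on the resolvent kernel of $(H+\epsilon)^{-1}$; this is precisely what produces the scaling $\delta_R=\delta R^{-2}$ with constants independent of $R$. Exactly as in the reduction leading to \eqref{equ2.4}, it suffices to find a constant $\delta>0$, \emph{independent of $R$}, with
\begin{align*}
\|\chi(|x|\le R)\chi(H\le \delta R^{-2})\|_{L^2\rightarrow L^2}<1.
\end{align*}
Setting $\epsilon=\delta R^{-2}$ and writing $\chi(H\le\epsilon)=(H+\epsilon)^{-1}(H+\epsilon)\chi(H\le\epsilon)$, I would factor
\begin{align*}
\chi(|x|\le R)\chi(H\le\epsilon)=\big[\chi(|x|\le R)(H+\epsilon)^{-1}\big]\big[(H+\epsilon)\chi(H\le\epsilon)\big],
\end{align*}
and use the spectral bound $\|(H+\epsilon)\chi(H\le\epsilon)\|\le 2\epsilon$. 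Everything then reduces to the single estimate $\|\chi(|x|\le R)(H+\epsilon)^{-1}\|\le CR^2\delta^{-1/2}$ for $\epsilon=\delta R^{-2}$.

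The key input is a Yukawa-type kernel bound, uniform in small $\epsilon$: for $n\ge 3$,
\begin{align*}
0\le (H+\epsilon)^{-1}(x,y)\le C\,\frac{e^{-c\sqrt{\epsilon}\,|x-y|}}{|x-y|^{n-2}}.
\end{align*}
For $H=-\Delta$ this is the classical behavior of the kernel $G_\epsilon(x-y)$ of $(-\Delta+\epsilon)^{-1}$, which is $\sim c_n|x-y|^{-(n-2)}$ at short range and decays like $e^{-\sqrt\epsilon|x-y|}$ at long range, through its modified Bessel representation. To pass to $H=-\Delta+V$ I would first use the Feynman--Kac formula to get the pointwise semigroup domination $e^{-tH}(x,y)\le e^{-t(-\Delta-V_-)}(x,y)$ (since $V=V_+-V_-\ge -V_-$), hence, after Laplace transform, $0\le(H+\epsilon)^{-1}(x,y)\le(-\Delta-V_-+\epsilon)^{-1}(x,y)$. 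For $-\Delta-V_-$ one expands
\begin{align*}
(-\Delta-V_-+\epsilon)^{-1}=\sum_{k\ge0}(-\Delta+\epsilon)^{-1}\big[V_-(-\Delta+\epsilon)^{-1}\big]^k,
\end{align*}
whose terms are nonnegative and whose exponential factors collapse to $e^{-c\sqrt\epsilon|x-y|}$ by the triangle inequality in the exponent. Convergence together with the pointwise domination by $C\,G_\epsilon$ follows from a $3G$-type inequality combined with the smallness condition $\|V_-\|_K<\pi^{n/2}/\Gamma(\tfrac n2-1)$ of \eqref{equ2.6}, which forces the relevant Kato operator norm $c_n\|V_-\|_K<\tfrac14<1$; this is the quantitative content underlying the self-adjointness statement of \cite[Lemma 3.1]{DP}.

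Granting this kernel bound, the required estimate follows from Schur's test. Integrating in $y$ over all of $\mathbb{R}^n$ gives, uniformly in $x$,
\begin{align*}
\int_{\mathbb{R}^n}\frac{e^{-c\sqrt\epsilon|x-y|}}{|x-y|^{n-2}}\,dy=C\int_0^\infty e^{-c\sqrt\epsilon r}\,r\,dr=\frac{C}{\epsilon},
\end{align*}
while integrating in $x$ over the ball gives, uniformly in $y$,
\begin{align*}
\int_{|x|\le R}\frac{dx}{|x-y|^{n-2}}\le\int_{|z|\le 2R}\frac{dz}{|z|^{n-2}}=C\,R^2.
\end{align*}
Hence $\|\chi(|x|\le R)(H+\epsilon)^{-1}\|\le(C/\epsilon)^{1/2}(CR^2)^{1/2}=CR/\sqrt\epsilon=CR^2/\sqrt\delta$, and combining with $\|(H+\epsilon)\chi(H\le\epsilon)\|\le 2\epsilon=2\delta R^{-2}$ yields $\|\chi(|x|\le R)\chi(H\le\delta R^{-2})\|\le 2C\sqrt\delta$, which is $<1$ once $\delta$ is chosen small. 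Crucially $\delta$ is independent of $R$, giving the uniform constants in \eqref{equ2.7}.

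The hard part will be the $\epsilon$-uniform Yukawa bound for the \emph{perturbed} resolvent: the short-range singularity must remain exactly at order $|x-y|^{-(n-2)}$ and the long-range exponential decay at scale $\epsilon^{-1/2}$ must survive the perturbation, which is exactly what the Kato smallness on $V_-$ buys through convergence of the positive Born series. Everything else is soft. I would stress that the clean identity $\int_{|z|\le 2R}|z|^{-(n-2)}\,dz\sim R^2$, valid in every dimension $n\ge3$ because the radial weight turns $|z|^{-(n-2)}$ into $r\,dr$, is precisely the mechanism that forces the sharp scaling $\delta_R\sim R^{-2}$, in contrast with the non-sharp exponent produced by the $(H+\epsilon_0)^{-\alpha}$ argument of Lemma \ref{lem2.1}.
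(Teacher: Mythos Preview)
Your strategy is correct and reaches the right conclusion, but it is genuinely different from the paper's. The paper does not touch the resolvent kernel at all. Instead it first scales to $R=1$ by conjugating with $U_Rf=R^{n/2}f(R\cdot)$, which replaces $H$ by $H_R=-\Delta+R^2V(R\cdot)$; then it factors
\[
\|\chi(|x|\le1)\chi(H_R\le\delta)\|\ \le\ \|\chi(|x|\le1)\,|x|\|\cdot\||x|^{-1}H_R^{-1/2}\|\cdot\|H_R^{1/2}\chi(H_R\le\delta)\|\ \le\ C_H\,\delta^{1/2},
\]
and reduces everything to the single operator inequality $\||x|^{-1}H_R^{-1/2}\|\le C_H$ uniformly in $R$. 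That follows from Hardy's inequality $\||x|^{-1}(-\Delta)^{-1/2}\|\le C$ together with $\|(-\Delta)^{1/2}H_R^{-1/2}\|\le C$, the latter coming from a $TT^*$ argument on $|V_{R-}|^{1/2}(-\Delta)^{-1/2}$ and the scale invariance $\|V_{R-}\|_K=\|V_-\|_K$. So the paper's mechanism for the sharp $R^{-2}$ scaling is Hardy plus scale invariance of the Kato norm, not a pointwise Green's function estimate.

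Your route trades this purely $L^2$ argument for a pointwise Yukawa bound on $(H+\epsilon)^{-1}(x,y)$ and a Schur test. What you gain is a concrete picture of why $R^{-2}$ is forced (the $\int_{|z|\le R}|z|^{2-n}\,dz\sim R^2$ identity); what you pay is that the ``hard part'' you flag is genuinely hard---the uniform-in-$\epsilon$ comparability of the perturbed and free resolvent kernels is a theorem in its own right (a 3G/conditional-gauge result in the spirit of Chung--Zhao), whereas the paper's proof needs only Hardy and a one-line $TT^*$ bound. Two small points on your write-up: the containment $\{|x|\le R\}\subset\{|x-y|\le 2R\}$ underlying your column-sum estimate only holds for $|y|\le R$; the uniform bound $\sup_y\int_{|x|\le R}|x-y|^{2-n}\,dx\le CR^2$ is still true, but the clean justification is symmetric decreasing rearrangement rather than inclusion. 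Also, the Feynman--Kac domination $e^{-tH}\le e^{-t(-\Delta-V_-)}$ already uses that $V_+$ is at least form-bounded (so that $H$ is well defined); this is where the Kato hypothesis on $V_+$ in \eqref{equ2.6} is implicitly invoked.
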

\begin{proof}
We first point out that it suffices to prove the case $R=1$ via a scaling argument. To this end, we consider the scaling operator
$$
U_Rf=R^{\frac n2}f(R\cdot),\,\,\,\,\, R>0.
$$
Clearly, $U_R$ is an isometry on $L^2(\mathbb{R}^n)$. Now set $H_R=\Delta+V_R$, where $V_R=R^2V(R\cdot)$. A direct computation yields
\begin{align}\label{equ2.8}
U_R^{-1}H_RU_R=R^2H.
\end{align}
Thus \eqref{equ2.7} follows by proving that for all $R>0$, there exists a $\delta>0$ such that
\begin{align}\label{equ2.9}
\|\chi(|x|\leq 1)\chi(H_R\leq \delta)\|_{L^2\rightarrow L^2}<1.
\end{align}
In order to show \eqref{equ2.9}, we point out that the key is to verify the following
\begin{align}\label{equ2.10}
\||x|^{-1}H_R^{-\frac12}\|_{L^2\rightarrow L^2}\leq C_H<\infty.
\end{align}
Indeed, applying \eqref{equ2.10}, we have
\begin{align}\label{equ2.10'}
\|\chi(|x|\leq 1)\chi(H_R\leq \delta)\|\leq& \|\chi(|x|\leq 1)|x|\|\cdot\||x|^{-1}H_R^{-\frac12}\|\cdot\|H^{\frac12}\chi(H_R\leq \delta)\|\nonumber \\
\leq& C_H\cdot\delta^{\frac12}<1,
\end{align}
provided $\delta<C_H^{-2}$, which implies \eqref{equ2.9}. Therefore it remains to prove \eqref{equ2.10}. Note that when $n\ge 3$, in view of the Hardy's type inequality
\begin{align}\label{hardy}
\||x|^{-1}(-\Delta)^{-\frac12}\|_{L^2\rightarrow L^2}\leq C.
\end{align}
\eqref{equ2.10} follows if one can prove that
\begin{align}\label{equ2.11}
\|(-\Delta)^{\frac12}H_R^{-\frac12}\|_{L^2\rightarrow L^2}\leq C\,\,\,  \text{for all}\,\,R>0.
\end{align}
In order to prove \eqref{equ2.11}, we denote the operator
$$
T_V=|V_{R-}|^{\frac12}(-\Delta)^{-\frac12},
$$
then by a $TT^*$ argument (see \cite[Lemma 3.1]{DP}) and observing that the global Kato norm is invariant under the scaling, i.e., $\|V_R\|_K=\|V\|_K$, for any $R>0$, we deduce that
\begin{align*}
\|T_VT_V^*f\|^2\leq& \frac{\|V_{R-}\|_K}{\alpha_n^2}\int\int{\frac{|V_{R-}||f(y)|^2}{|x-y|^{n-2}}\,dxdy} \\
\leq& \frac{\|V_{R-}\|^2_K}{\alpha_n^2}\|f\|^2\\
=& \frac{\|V_{-}\|^2_K}{\alpha_n^2}\|f\|^2,
\end{align*}
where in the first inequality, we used the fact that the kernel of $(-\Delta)^{-1}$ satisfies
$$
|(-\Delta)^{-1}(x,y)|\leq \frac{1}{\alpha_n|x|^{n-2}},\,\,\,\,\, \alpha_n=4\pi^{\frac n2}/\Gamma(\frac n2-1).
$$
\end{proof}
The method in the proof of Lemma \ref{lem2.2} can be applied to other situations directly, such as $H=-\Delta-\frac{c_n}{|x|^2}$, $c_n<\frac{(n-2)^2}{4}$ and $H=(-\Delta)^{\alpha}$, $\alpha>0$. More precisely we have
\begin{corollary}
Let $n\ge 3$,  $H=-\Delta-\frac{c_n}{|x|^2}$, where $c_n<\frac{(n-2)^2}{4}$. Then  Then for any $R>0$, there are uniform constants $C, \delta>0$ such that
\begin{align}\label{equ2.12}
\|f\|^2\leq C\left(\|\chi(|x|\ge R)f\|^2+\|\chi(H\ge \delta R^{-2})f\|^2\right),\,\,\, f\in  L^2(\mathbb{R}^n).
\end{align}
\end{corollary}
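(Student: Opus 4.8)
The plan is to follow the proof of Lemma \ref{lem2.2} almost verbatim, exploiting the fact that the inverse-square potential $V(x)=-c_n/|x|^2$ is \emph{exactly} scale-invariant. Indeed, with the scaling operator $U_R f=R^{n/2}f(R\cdot)$ one computes $V_R(x)=R^2 V(Rx)=-c_n/|x|^2=V(x)$, so that $H_R=H$ for every $R>0$ and \eqref{equ2.8} reads $U_R^{-1}HU_R=R^2H$. Consequently \eqref{equ2.12} is again equivalent, by the reduction in Lemma \ref{lem2.2}, to the single bound
\begin{align*}
\|\chi(|x|\le 1)\chi(H\le \delta)\|_{L^2\to L^2}<1
\end{align*}
for a suitable $\delta>0$, and the whole matter is reduced to the weighted resolvent estimate $\||x|^{-1}H^{-1/2}\|_{L^2\to L^2}\le C_H<\infty$.

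The key difference from Lemma \ref{lem2.2} — the place where the $TT^*$/Kato-norm argument is replaced by something cleaner — is that this weighted bound now follows directly from the sharp Hardy inequality at the level of quadratic forms. Since $c_n<(n-2)^2/4$, Hardy's inequality $\|\nabla f\|^2\ge \frac{(n-2)^2}{4}\||x|^{-1}f\|^2$ first shows that $H$ extends to a nonnegative self-adjoint (Friedrichs) operator, so that $H^{-1/2}$ and $\chi(H\le\delta)$ make sense. Quantitatively, for $f$ in the form domain,
\begin{align*}
\langle Hf,f\rangle=\|\nabla f\|^2-c_n\||x|^{-1}f\|^2\ge \Big(\tfrac{(n-2)^2}{4}-c_n\Big)\||x|^{-1}f\|^2,
\end{align*}
and since the bracket is strictly positive this yields
\begin{align*}
\||x|^{-1}f\|^2\le \frac{4}{(n-2)^2-4c_n}\,\|H^{1/2}f\|^2,
\end{align*}
i.e. \eqref{equ2.10} holds with $C_H=2/\sqrt{(n-2)^2-4c_n}$.

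With this in hand the conclusion is identical to \eqref{equ2.10'}: factoring
\begin{align*}
\|\chi(|x|\le 1)\chi(H\le\delta)\|\le \|\chi(|x|\le 1)|x|\|\cdot\||x|^{-1}H^{-1/2}\|\cdot\|H^{1/2}\chi(H\le\delta)\|\le C_H\,\delta^{1/2},
\end{align*}
which is $<1$ as soon as $\delta<C_H^{-2}$; by scale-invariance this $\delta$ is genuinely independent of $R$, giving \eqref{equ2.12}. I expect no serious obstacle beyond one genuine point: verifying that the critical coupling $c_n<(n-2)^2/4$ is precisely what makes the Hardy constant dominate the potential, so that one and the same inequality simultaneously delivers the positivity of $H$ and the finiteness of $C_H$. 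Once this form comparison is secured, everything else is the spectral-projection bookkeeping already carried out in Lemma \ref{lem2.2}.
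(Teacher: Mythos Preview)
Your proposal is correct and follows essentially the same approach as the paper: both note the scale-invariance $H_R=H$ and use Hardy's inequality with $c_n<(n-2)^2/4$ to obtain the key bound \eqref{equ2.10}, then conclude via \eqref{equ2.10'}. The only cosmetic difference is that the paper passes through the intermediate estimate $\|(-\Delta)^{1/2}H^{-1/2}\|\le C$ (and then composes with \eqref{hardy}), whereas you derive $\||x|^{-1}H^{-1/2}\|\le C_H$ directly from the form comparison; the underlying argument is the same.
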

\begin{proof}
We follow the proof in Lemma \ref{lem2.2}, and note that in this case, we have
$$
H_R=H,\,\,\,\, \text{for any}\,\, R>0.
$$
Meanwhile, it follows from the Hardy's inequality \eqref{hardy} that
$$
(\varphi, \frac{c_n}{|x|^2}\varphi)\leq \|\nabla\varphi\|^2,\,\,\,\,\, c_n<\frac{(n-2)^2}{4},
$$
which implies that
$$
\|(-\Delta)^{\frac12}H^{-\frac12}\|_{L^2\rightarrow L^2}\leq C.
$$
\end{proof}

\begin{corollary}\label{cor2.5}
Let $n\ge 3$,  $H=(-\Delta)^{\frac{s}{2}}$, $s>0$. Then  Then for any $R>0$, there are uniform constants $C, \delta>0$ such that
\begin{align}\label{equ2.14}
\|f\|^2\leq C\left(\|\chi(|x|\ge R)f\|^2+\|\chi(H\ge \delta R^{-s})f\|^2\right),\,\,\, f\in  L^2(\mathbb{R}^n).
\end{align}
\end{corollary}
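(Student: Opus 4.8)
The plan is to follow the proof of Lemma~\ref{lem2.2} verbatim, exploiting the exact homogeneity of $H=(-\Delta)^{\frac s2}$ under dilations. First I would reduce to $R=1$ by the same scaling $U_Rf=R^{\frac n2}f(R\cdot)$. Here there is no potential, so the conjugation is cleaner than in Lemma~\ref{lem2.2}: since conjugation by a unitary commutes with the functional calculus, the identity $U_R^{-1}(-\Delta)U_R=R^2(-\Delta)$ gives
\[
U_R^{-1}HU_R=\big(U_R^{-1}(-\Delta)U_R\big)^{\frac s2}=R^{s}H,
\]
so that no rescaled operator $H_R$ distinct from $H$ appears. Consequently $\chi(|x|\le R)=U_R^{-1}\chi(|x|\le1)U_R$ and $\chi(H\le\delta R^{-s})=U_R^{-1}\chi(H\le\delta)U_R$, and by unitary invariance of the norm the inequality \eqref{equ2.14} reduces, exactly as \eqref{equ2.7} reduced to \eqref{equ2.9}, to exhibiting a single $\delta>0$ with
\[
\|\chi(|x|\le1)\chi(H\le\delta)\|_{L^2\to L^2}<1.
\]

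Next I would estimate this norm by the factorization already used in \eqref{equ2.10'}, namely
\[
\|\chi(|x|\le1)\chi(H\le\delta)\|\le\|\chi(|x|\le1)|x|\|\cdot\||x|^{-1}H^{-\frac1s}\|\cdot\|H^{\frac1s}\chi(H\le\delta)\|.
\]
The first factor is $\le1$ since $|x|\le1$ on the support, and the third is $\le\delta^{\frac1s}$ by the spectral theorem. The decisive point is the middle factor, and the choice of the exponent $\frac1s$ is exactly what makes it elementary: since $H^{\frac1s}=\big((-\Delta)^{\frac s2}\big)^{\frac1s}=(-\Delta)^{\frac12}$, we have
\[
\||x|^{-1}H^{-\frac1s}\|=\||x|^{-1}(-\Delta)^{-\frac12}\|\le C,
\]
which is precisely the Hardy inequality \eqref{hardy} available for $n\ge3$. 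Combining the three bounds yields $\|\chi(|x|\le1)\chi(H\le\delta)\|\le C\delta^{\frac1s}<1$ as soon as $\delta<C^{-s}$, which is the required estimate and finishes the proof.

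The step I expect to demand the most care --- and the reason the conclusion holds for every $s>0$ rather than only for $s<n$ --- is the choice of power in the factorization. The naive dimensional pairing would match $|x|^{-\frac s2}$ with $H^{-\frac12}=(-\Delta)^{-\frac s4}$ and force the \emph{fractional} Hardy inequality $\||x|^{-\frac s2}(-\Delta)^{-\frac s4}\|<\infty$, which holds only in the range $0<\frac s2<\frac n2$, i.e.\ $s<n$. Splitting off the power $\frac1s$ instead collapses $H^{-\frac1s}$ to the first-order operator $(-\Delta)^{-\frac12}$, so the single integer-order inequality \eqref{hardy} applies uniformly in $s$; this is where the hypothesis $n\ge3$ enters. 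I would also record the harmless point that $0$ lies in the continuous spectrum of $(-\Delta)^{\frac s2}$, so that $H^{-\frac1s}H^{\frac1s}$ acts as the identity on the range of $\chi(H\le\delta)$ and the factorization above is legitimate.
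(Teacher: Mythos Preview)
Your proposal is correct and follows essentially the same approach as the paper: both rely on the factorization through $|x|^{-1}(-\Delta)^{-\frac12}$ and the classical Hardy inequality \eqref{hardy}, exploiting that $H^{\frac1s}=(-\Delta)^{\frac12}$. The only cosmetic difference is that you first reduce to $R=1$ by the dilation $U_R$, whereas the paper works directly with general $R$ and lets the factors $\|\chi(|x|\le R)|x|\|\le R$ and $\|H^{\frac1s}\chi(H\le\delta R^{-s})\|\le\delta^{\frac1s}R^{-1}$ cancel in the product; your discussion of why the exponent $\frac1s$ (rather than $\frac12$) is the correct choice to cover all $s>0$ is a worthwhile remark that the paper leaves implicit.
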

\begin{proof}
We note that the two types of estimates \eqref{equ2.2} and \eqref{equ2.4} are equivalent to each other. Thus the proof is followed from
\begin{align*}
\|\chi(|x|\leq R)\chi(H\leq \delta R^{-s})\|\leq& \|\chi(|x|\leq R)|x|\|\cdot\||x|^{-1}(-\Delta)^{-\frac12}\|\cdot\|H^{\frac{1}{s}}\chi(H\leq \delta R^{-s})\| \\
\leq& C_H\cdot\delta^{\frac{1}{s}}<1,
\end{align*}
provided $\delta<C_H^{-s}$.
\end{proof}

We mention that the uniform estimates \eqref{equ2.7}, \eqref{equ2.14} are consistent with the Uncertainty principle \eqref{equ1.4} (or \eqref{equ2.1.1} below) in the following sense: they correspond  to $r_1=R$, $r_2=R^{-1}$, then the constant $C(n, R, R^{-1})$ is independent of $R$ in view of \eqref{equ1.3.1} (or \eqref{equ2.1.2} below). In the following part, we shall assume that $r_1,\,r_2>0$ are independent, and discuss the explicit dependence of the constant on $r_1, r_2$ for general $H$.

\subsubsection{Explicit constants}\label{sec2.1.2}
We recall that in the original Nazarov type Uncertainty principle \eqref{equ1.4}, if we let $A=B(0, r_1),\,\,\text{and}\,\,B=B(0, r_2)$ with arbitrary $r_1,\,r_2>0$, then it could be rewritten as
\begin{align}\label{equ2.1.1}
\|f\|_2^2\leq C(n, r_1, r_2)\left(\|\chi(|x|\ge r_1)f\|_2^2+\|\chi(-\Delta\ge r^2_2)f\|_2^2\right), \,\,\,\,f\in L^2(\mathbb{R}^n),
\end{align}
with
\begin{align}\label{equ2.1.2}
C(n, r_1, r_2)\leq Ce^{c_1\min\{(r_1r_2)^n,\, r_1r_2\}},
\end{align}
which is followed from \eqref{equ1.3.1} by noting that when $A$ is a ball, then $\omega(A)$ is the diameter (see \cite[p.36]{Jam}).

We shall establish inequalities of this form with $H_0=-\Delta$ replaced by the Schr\"{o}dinger operator $H=-\Delta+V$ (for a certain class of potentials) as well as the fractional Laplacian $H=(-\Delta)^{\alpha}$, $\alpha>0$. More precisely, we first prove
\begin{lemma}\label{lem2.1.1}
Let $H=-\Delta+V$ be a nonnegative self-adjoint operator on $L^2(\mathbb{R}^n)$. Assume that there exists some $A>0$ such that
\begin{align}\label{equ2.1.3}
\int_{\mathbb{R}^n}{|\hat{V}(\xi)|^2 e^{A|\xi|^2}\,d\xi}=C(A)<\infty.
\end{align}
Then there are positive constants $M_1,\, M_2,\, C,\, c_1$ so that for any $0<r_1\leq M_1$, and $r_2\ge M_2$, one has
\begin{align}\label{equ2.1.4}
\|f\|_2^2\leq Ce^{c_1r_1\cdot r_2}\left(\|\chi(|x|\ge r_1)f\|_2^2+\|\chi(H\ge r^2_2)f\|_2^2\right), \,\,\,\,f\in L^2(\mathbb{R}^n),
\end{align}
\end{lemma}

\begin{proof}
We claim that there exists constant $c_2>0$ such that for any $r_2>0$, the following estimate
\begin{align}\label{equ2.1.5}
\|\chi(H_0\ge r^2_2)f\|_2^2\leq 18\|\chi(H\ge r^2_2/4)f\|_2^2+2e^{-c_2r_2}\|f\|_2^2
\end{align}
holds for every $f\in L^2(\mathbb{R}^n)$. Then combining \eqref{equ2.1.5} with the inequality \eqref{equ2.1.1} yields
\begin{align}\label{equ2.1.6}
\|f\|_2^2\leq Ce^{c_1r_1\cdot r_2}\left(\|\chi(|x|\ge r_1)f\|_2^2+\|\chi(H\ge r^2_2/4)f\|_2^2\right)+2Ce^{(c_1r_1-c_2)\cdot r_2}\|f\|_2^2,
\end{align}
Take $M_1=\frac{c_2}{2c_1}$ and $M_2=\frac{2}{c_2}\ln{2C}$, then for any $0<r_1\leq M_1$, and $r_2\ge M_2$, one has $2Ce^{(c_1r_1-c_2)\cdot r_2}<\frac12$, then the last term in the right hand side of \eqref{equ2.1.6} can be absorbed in the left hand side, which implies the desired estimate \eqref{equ2.1.4}.

In order to prove \eqref{equ2.1.5}, we first point out that we are allowed to use an analytic version of the projection operator $\chi(H\ge a)$ (see \cite{S11}), which plays an important role in our argument below. For any $a>0$, let
\begin{align*}
\chi_a(r)=\begin{cases}
1,\,\, \,\,r\ge a,\\
0,\,\, \,\,r< a;
\end{cases}
\tilde{\chi}_a(r)=\begin{cases}
1,\,\,\,&r\ge a,\\
(\tanh{(r-a/2)}+1)/2,\,\,\,&a/4<r< a,\\
0,\,\,\,&r\le a/4;
\end{cases}
\end{align*}
and $\varphi_a(r)=(\tanh{(r-a/2)}+1)/2$. Clearly, we have
\begin{align}\label{equ2.1.7}
\chi_a(r)\leq \tilde{\chi}_a(r)\leq \chi_{a/4}(r),
\end{align}
and $\varphi_a(r)$ is analytic in the strip $S_{\frac{\pi}{2}}=\{r\in\mathbb{C},\,\, \Im |r|<\frac{\pi}{2}\}$, moreover
\begin{align}\label{equ2.1.8}
|\varphi_a(r)-\tilde{\chi}_a(r)|\leq e^{-\frac{a}{4}},\,\,\,r\in \mathbb{R}.
\end{align}
In view of \eqref{equ2.1.7}, we have (writing $a=r_2^2>0$) the following string of inequalities
\begin{align}\label{equ2.1.9}
\|\chi_a(H_0)f\|&\leq \|\tilde{\chi}_a(H_0)f\|\leq \|\tilde{\chi}_a(H)f\|+\|\left(\tilde{\chi}_a(H_0)-\tilde{\chi}_a(H)\right)f\|\nonumber\\
&\leq \|\chi_{a/4}(H)f\|+\|\left(\tilde{\chi}_a(H_0)-\tilde{\chi}_a(H)\right)\chi(H\ge a/4)f\|\nonumber\\
&+\|\left(\tilde{\chi}_a(H_0)-\tilde{\chi}_a(H)\right)\chi(H<a/4)f\|\nonumber\\
&\leq 3 \|\chi_{a/4}(H)f\|+\|\left(\tilde{\chi}_a(H_0)-\tilde{\chi}_a(H)\right)\chi(H<a/4)f\|,
\end{align}
hence \eqref{equ2.1.5} would follow if one can prove that there exists some $c_2>0$, such that the following estimate
\begin{align}\label{equ2.1.10}
\|\left(\tilde{\chi}_a(H_0)-\tilde{\chi}_a(H)\right)\chi(H<a/4)f\|\leq e^{-\frac{c_2\sqrt{a}}{2}}\|f\|
\end{align}
holds for every $f\in L^2$. Observe that \eqref{equ2.1.8} implies that
\begin{align}\label{equ2.1.11}
\|\varphi_a(H)-\tilde{\chi}_a(H)\|_{L^2-L^2}\leq e^{-a/4},
\end{align}
which decays faster than the bound in \eqref{equ2.1.10}, hence it suffices to prove
\begin{align}\label{equ2.1.12}
\|\varphi_a(H_0)-\varphi_a(H)\chi(H<a/4)f\|\leq e^{-c_2\sqrt{a}}\|f\|.
\end{align}

To prove \eqref{equ2.1.12}, we recall that $\frac{d}{dt}\tanh t=1/\cosh^2 t$ and $\mathcal{F}(1/\cosh t)=1/\cosh t$. Hence we use spectral theorem to write
\begin{align}\label{equ2.1.13}
\left(\varphi_a(H_0)-\varphi_a(H)\right)\chi(H<a/4)f=\frac12\int_{\mathbb{R}}e^{ita/2}{\frac{G(t)}{t}}(e^{itH_{0}}-e^{itH})\chi(H<a/4)f\,dt,
\end{align}
where $G(t)=\frac{1}{\cosh t}\ast\frac{1}{\cosh t}$, therefore one can check that $G(t)$  is analytic in the strip $S_{\frac{1}{4}}=\{ |\Im t|<\frac{1}{4}\}$ and there are constants $C, c>0$ so that $|G(t)|\leq Ce^{-c|\Re t|}$ when $t\in S_{\frac{1}{4}}$. Now we consider the function defined by
\begin{align}\label{equ2.1.14}
T_{f,\,g}(t)=\langle g,\, \frac{G(t)}{t}(e^{itH_{0}}-e^{itH})\chi(H<a/4)f\rangle.
\end{align}
We shall prove that $T_{f,\,g}(t)$ is an analytic function in the strip $S_{\frac{1}{4}}$. First we consider the easier case $t\ne0$, and $t\in S_{\frac{1}{4}}$. We observe that $\chi(H<a/4)f$ is an analytic vector for $e^{itH}$ (see e.g., in \cite{Ne}) in the sense that for any $A>0$,
\begin{align}\label{equ2.1.15}
\sum_{j=0}^{\infty}{\frac{A^j}{j!}\|\frac{d^j}{dt^j}(e^{itH}\chi(H<a/4)f)\|}< \sum_{j=0}^{\infty}{\frac{1}{j!}(\frac{Aa}{4})^j}=e^{\frac{Aa}{4}}<\infty,
\end{align}
which implies that for any $f, g\in L^2$, the function $\langle g, e^{itH}\chi(H<a/4)f\rangle$ has an analytic continuation to the strip $|\Im t|<A$.

To proceed, we note that from our assumption on the potential, we have $V\in C^{\infty}$ and all its derivatives are bounded. We shall further need the following fact concerning the quantitative bound of each derivative for the potential $V$: there exists some $C>0$ such that for all $\alpha=(\alpha_1,\ldots,\alpha_n)\in \mathbb{N}^n$
\begin{align}\label{equ2.1.19}
\|\partial_x^{2\alpha}V(x)\|_{\infty}\leq C^{|\alpha|}\alpha!.
\end{align}
In fact, our assumption \eqref{equ2.1.3} shows that $V$ can be extended to an analytic function on $\mathbb{C}^n$ and in light of  H\"{o}lder's inequality,
\begin{align*}
|\partial_x^{2\alpha}V(x)|&=|(2\pi)^{-\frac{n}{2}}\int_{\mathbb{R}^n}{e^{ix\cdot\xi}(i\xi)^{2\alpha}\hat{V}(\xi)\,d\,\xi}|\\
&\leq (2\pi)^{-\frac{n}{2}}\left(\int|\xi^{4\alpha}|e^{-A|\xi|^2}\,d\,\xi\right)^{\frac12}\left(\int|\hat{V}(\xi)|^2e^{A|\xi|^2}\,d\,\xi\right)^{\frac12}.
\end{align*}
A direct computation yields
\begin{align*}
\int|\xi^{4\alpha}|e^{-A|\xi|^2}\,d\,\xi=\prod_{i=1}^n\int{|\xi_i^{4\alpha_i}|e^{-A\xi_i^2}\,d\xi_i}=
\frac{1}{2^n}\prod_{i=1}^n\frac{\Gamma(2\alpha_i+\frac12)}{A^{2\alpha_i+\frac12}}.
\end{align*}
By stirling's approximation for factorials, we obtain for any $\alpha\in \mathbb{N}^n$
\begin{align*}
\|\partial_x^{2\alpha}V(x)\|_{\infty}&\leq C^{|\alpha|}C(A)^{\frac12}\sqrt{(2\alpha)!}\\
&\leq C^{|\alpha|}\cdot \alpha!,
\end{align*}
which is \eqref{equ2.1.19}. Hence it follows from \cite{JN} that
\begin{align}\label{equ2.1.16}
\|H^{k}_0(1+H)^{-k}\|\leq C^k(k/2)!,\,\,\,k=1,2,\ldots,
\end{align}
which implies that
$$
\sum_{j=0}^{\infty}{\frac{A^j}{j!}\|\frac{d^j}{dt^j}(e^{itH_0}\chi(H<a/4)f)\|}<\infty.
$$
Hence $\chi(H<a/4)f$ is also an analytic vector for $e^{itH_0}$, which shows that $T_{f,\,g}(t)$ is an analytic function when $t\ne0$, and $t\in S_{\frac{1}{4}}$.

Next we shall deal with the case $t=0$, and we rewrite $T_{f,\,g}(t)$ as
\begin{align}\label{equ2.1.17}
T_{f,\,g}(t)=\frac{iG(t)}{t}\int_0^t{\langle g,\,e^{isH_{0}}Ve^{i(t-s)H}\chi(H<a/4)f\rangle\, ds},
\end{align}
and define
\begin{align}\label{equ2.1.18}
\tilde{T}_{f,\,g}(t, s)={\langle g,\,e^{isH_{0}}Ve^{i(t-s)H}\chi(H<a/4)f\rangle}.
\end{align}
It's enough to show that $\tilde{T}_{f,\,g}(t, s)$ is analytic for $t=0$, $s=0$ respectively.

To this end, we note that by an induction argument, it's straightforward to check that for every $j\in \mathbb{Z}^+$,
\begin{align*}
\frac{d^j}{ds^j}\left(e^{isH_0}Ve^{-isH}\chi(H<a/4)f\right)=\sum^j_{k=0}{(-1)^{j-k}\binom j k e^{isH_0}H_0^kVH^{j-k}e^{-isH}\chi(H<a/4)f}.
\end{align*}
Furthermore, by the generalized Leibniz's rule, we have for $1\leq k\leq j$,
\begin{align}\label{equ2.1.20}
H_0^kVH^{j-k}e^{-isH}\chi(H<a/4)f=\sum_{\alpha}{(P^{(\alpha)}(D)V)\cdot\partial^{\alpha}H^{j-k}e^{-isH}\chi(H<a/4)f/\alpha!},
\end{align}
where $P(\xi)=|\xi|^{2k}$, and $\partial_{\xi}^{\alpha}P(\xi)$ is the corresponding symbol for $P^{(\alpha)}(D)$. In view of \eqref{equ2.1.19} and the fact that $\binom {2j} k\leq \binom {2j} j\leq C2^{2j}$ holds for all $1\leq k\leq j$, we have
\begin{align}\label{equ2.1.21}
\|(P^{(\alpha)}(D)V)/\alpha!\|_{\infty}\leq C^{j}\cdot 2^{2j}\cdot\binom{2j}{|\alpha|}\cdot(j-|\alpha|/2)!,\,\,\,\,|\alpha|\leq 2k,
\end{align}
where the constant $C$ doesn't depend on $k, j$. Using \eqref{equ2.1.16}, we also find that
\begin{align}\label{equ2.1.22}
\|\partial^{\alpha}H^{j-k}e^{-isH}\chi(H<a/4)\|_{L^2-L^2}\leq C^{k}\cdot(\frac a 4)^{j}\cdot(|\alpha|/2)!,\,\,\,\,|\alpha|\leq 2k.
\end{align}
Notice that $\sum_{|\alpha|\le 2j}{1}=\binom {2j+n-1} n\le Cj^n$. Therefore it follows from estimates above that
\begin{align}\label{equ2.1.23}
\|\frac{d^j}{ds^j}\left(e^{isH_0}Ve^{-isH}\chi(H<a/4)f\right)\|\leq Cj^n\cdot j!\cdot(Ca)^j.
\end{align}
This indicates that for each fixed $s$, $\tilde{T}_{f,\,g}(t, s)$ is an analytic function of $t$ for $|\Im t|\leq \frac{1}{10Ca}$. Meanwhile,  for each fixed $t$, using the same argument as \eqref{equ2.1.15}, it follows that $\tilde{T}_{f,\,g}(t, s)$ is an analytic function of $s$ for $s\in \mathbb{C}$, this proves the analyticity of $T_{f,\,g}(t)$ at the origin. Hence we have shown that  $T_{f,\,g}(t)$ is indeed an analytic function in the strip $S_{\frac{1}{2}}$. Besides, the arguments above also imply that
\begin{align}\label{equ2.1.24}
|T_{f,\,g}(t)|\leq \frac{C_a}{1+x^2},\,\,\,t=x+iy,\,\,x\in \mathbb{R},\,\,|y|\leq a^{-\frac12}/2,
\end{align}
where the constant $C_a$ is independent of $x\in\mathbb{R}$. Therefore, we obtain that there exists some $c_2>0$ so that
\begin{align}\label{equ2.1.25}
\|\left(\varphi_a(H_0)-\varphi_a(H)\right)\chi(H<a/4)f\|&=\sup_{\|g\|\leq 1}|\int{T_{f,\,g}(t)e^{ita/2}\,dt}|\nonumber\\
&\leq e^{-c_2\sqrt{a}}\|f\|,
\end{align}
which completes the proof of \eqref{equ2.1.12}.
\end{proof}

\begin{remark}\label{rmk.3}
(\romannumeral1) In a recent work of Naki\'{c} et al. \cite{NTTV}, based on Carleman estimates and some interpolation inequalities,  they proved the following inequality for bounded $V$:
\begin{align}\label{equ2.1.255}
\|f\|_2^2\leq C\exp\{cr_1\cdot r_2+r_1^{4/3}\|V\|_{\infty}\}\left(\|\chi(|x|\ge r_1)f\|_2^2+\|\chi(H\ge r^2_2)f\|_2^2\right), \,\,\,\,f\in L^2(\mathbb{R}^n).
\end{align}

(\romannumeral2) We discuss another possible approach by using spectral measures. Let $E'_H(\lambda)$ and $E'_0(\lambda)$ denote the spectral measure of $H$ and $H_0$ respectively. If one assumes that there exists a continuous function $g(\lambda)\ge c_0>0$ for all $\lambda>0$ such that
\begin{align}\label{equ2.1.26}
E'_H(\lambda)=g(\lambda)E'_0(\lambda)\,\,\,\,\lambda>0.
\end{align}
Then there is some constant $C>0$ so that the following estimate
\begin{align}\label{equ2.1.27}
\|\chi(H_0\ge r)f\|_2^2\leq C \|\chi(H\ge r)f\|_2^2,\,\,\,\, r>0
\end{align}
holds for any $f\in L^2$, which is much stronger than \eqref{equ2.1.5}. Indeed, by the spectral theorem and the assumption on $g$ mentioned above, we obtain
\begin{align}
\|\chi(H_0\ge r)f\|_2^2 &=\int_{\lambda\ge r}{(E'_0(\lambda)f, f)\,d\lambda}\nonumber\\
&\leq c_0^{-1}\int_{\lambda\ge r}{(E'_H(\lambda)f, f)d\lambda}\nonumber\\
&=c_0^{-1} \|\chi(H\ge r)f\|_2^2.\nonumber
\end{align}
\end{remark}


At the end of this subsection, we consider the fractional Laplacian $H=(-\Delta)^{\alpha}$, $\alpha>0$, we have the following
\begin{lemma}\label{cor2.5}
Let $n\ge 1$,  $H=(-\Delta)^{\frac{s}{2}}$, $s>0$. Then there exists a constant $C(n, r_1, r_2)$ satisfying \eqref{equ2.1.2} for any $r_1, r_2>0$,  such that
\begin{align}\label{equ2.1.28}
\|f\|^2\leq C(n, r_1, r_2)\left(\|\chi(|x|\ge r_1)f\|^2+\|\chi(H\ge r_2^{s})f\|^2\right),\,\,\, f\in  L^2(\mathbb{R}^n).
\end{align}
\end{lemma}
\begin{proof}
Observe that by the Plancherel theorem, we have
\begin{align*}
\|\chi(H\ge r_2^{s})f\|^2=\int_{|\xi|\ge r_2}|\hat{f}(\xi)|^2\,d\xi=\|\chi(-\Delta\ge r_2^{2})f\|^2.
\end{align*}
Hence the desired estimate \eqref{equ2.1.28} follows directly from the Nazarov Uncertainty principle \eqref{equ2.1.1}-\eqref{equ2.1.2}.
\end{proof}

\subsection{Minimal escape velocities}\label{sec2.2}
In this subsection, we first collect some known minimal velocity estimates for the unitary evolutions $e^{-itH}$. Then we discuss examples of operators which these estimates apply to. As already pointed out in the introduction, results established in this subsection play an essential role in our proof of the observability estimates in Sect. \ref{sec3}.

The starting point is Mourre's inequality \cite{Mo}, whose fundamental idea is to find observable (self-adjoint operator) $A$ such that the commutator $i[H,\, A]$ is conditionally positive, in the sense that
\begin{align}\label{equ2.15}
E_{\Delta}i[H,\, A]E_{\Delta}\ge \theta E_{\Delta},\,\,\, \theta>0
\end{align}
for some compact interval $\Delta\subset\mathbb{R}$, where $E_{\Delta}$ is the corresponding spectral projection of $H$.  To provide further intuition  in the understanding of condition \eqref{equ2.15}, let us consider $H=-\frac12\Delta$, and $A$ is the generator of dilation:
\begin{align}\label{equ2.150}
A=\frac12(x\cdot p+p\cdot x), \,\,\,\, i[H,\, A]=2H, \,\, p=-\imath\nabla_x.
\end{align}
Observe that $A=i[H,\, \frac{x^2}{2}]$, then \eqref{equ2.15} can be written as $\partial_t^2\langle x^2\rangle_t\ge 2\theta$, where $\langle x^2\rangle_t=\langle \psi_t,\, x^2\psi_t\rangle$, and $\psi_t=e^{-itH}\psi\in E_{\Delta}$, which in turn implies that
\begin{align}\label{equ2.15'}
\langle x^2\rangle_t\ge \theta t^2+O(t),\,\,\, t\rightarrow \infty.
\end{align}
\par
The second key ingredient is that the multiple commutator of $H$ and $A$ are well behaved. More precisely, we assume that for any $g\in C_0^{\infty}(\mathbb{R})$
\begin{align}\label{equ2.16}
\|ad_{A}^{(k)}g(H)\|\leq C_k,\,\,\, k=1,2.
\end{align}
We regard \eqref{equ2.16} as a regularity assumption and refer to the monograph \cite{AMG} for extensive discussion on this. We mention here that the commutator method, used in the proof of minimal velocity estimates, can be viewed as an abstract version of the integration by parts arguments. Hence for higher value of $k$ that \eqref{equ2.16} is satisfied, the faster decay (for $t$) is expected. Here, we only assume that $k\le 2$, which is good enough for our applications.
\par
Having discussed the main assumptions, we now present the following type of minimal velocity estimates.

\begin{lemma}\cite[Theorem 5.2]{SS}\label{lem2.3}
Assume $H$ and $A$ satisfy \eqref{equ2.15} and \eqref{equ2.16}. Furthermore, if
\begin{align}\label{equ2.17}
\|(1+|A|^2)^{\alpha/2}(H+i)^{-1}(1+|x|^2)^{-\alpha/2}\|\leq C\,\,\,\, \text{for}\,\,\,  0\leq\alpha\leq 1.
\end{align}
Then for any $v<v_{min}=\sqrt{\theta}$, and $0<m<1$
\begin{align}\label{equ2.18}
\|F(\frac{|x|}{t}<v)e^{-itH}\psi\|\leq C(1+|t|)^{-m}\left(\|\psi\|+\||A|\psi\|\right),
\end{align}
where $\psi=E_{\Delta}\psi\in D(|A|)$.
\end{lemma}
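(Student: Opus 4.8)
The natural framework here is the positive--commutator (Mourre) method, realized through time--dependent propagation observables in the spirit of \cite{SS}; the plan is to run the whole argument with the dilation--type observable $A$, for which the conditional positivity \eqref{equ2.15} is available, and only at the very end to convert the phase--space localization in $A/t$ back into the configuration localization $F(|x|/t<v)$. Since $\psi=E_\Delta\psi$, fix $g\in C_0^\infty(\mathbb{R})$ with $g\equiv1$ on $\Delta$, so that $e^{-itH}\psi=g(H)e^{-itH}\psi=:g(H)\psi_t$ and $g(H)$ may be inserted freely. The reduction step is to write a smooth partition $1=\bar F_1(A/t\le\lambda)+\bar F_2(A/t>\lambda)$ with $\lambda<\theta$ chosen so that, on the energy window $\Delta$, the region $\{|x|/t<v\}$ is essentially disjoint from $\{A/t>\lambda\}$ (possible precisely because $v<\sqrt{\theta}$), and then to bound
\[ \|F(|x|/t<v)g(H)\psi_t\| \le \|\bar F_1(A/t\le\lambda)g(H)\psi_t\| + \|F(|x|/t<v)g(H)\bar F_2(A/t>\lambda)\|\,\|\psi\|. \]
On $\mathrm{supp}\,F$ one has $|x|\lesssim vt$ while $\bar F_2$ forces $A\gtrsim\lambda t$; the comparison $\langle A\rangle\lesssim\langle x\rangle$ on $\mathrm{Ran}\,g(H)$ supplied by \eqref{equ2.17} (with $\alpha=1$), made quantitative through an almost--analytic extension of $\bar F_2$, shows the last operator norm is $O(t^{-m})$. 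This leaves the genuine minimal velocity estimate for $A$.

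For the term $\bar F_1(A/t\le\lambda)g(H)\psi_t$ I would introduce the propagation observable $\Phi(t)=g(H)F(A/t)g(H)$ with $F$ smooth, non--decreasing, $0\le F\le1$, and $\mathrm{supp}\,F'\subset(\lambda,\theta)$. Expanding the commutator $i[H,F(A/t)]$ via a Helffer--Sjöstrand representation, the Heisenberg derivative $D\Phi=\partial_t\Phi+i[H,\Phi]$ has leading part
\[ D\Phi = \tfrac1t\,g(H)F'(A/t)\big(i[H,A]-\tfrac{A}{t}\big)g(H) + R(t), \]
where the remainder $R(t)$ gathers the double commutators and is $O(t^{-2})$ \emph{precisely} by the regularity hypothesis \eqref{equ2.16} for $k=1,2$. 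On $\mathrm{supp}\,F'$ one has $A/t<\theta$, so the Mourre bound \eqref{equ2.15} gives $i[H,A]-A/t\ge\theta-\lambda>0$ there, and since $F'\ge0$ the leading term is $\ge\tfrac{c}{t}g(H)F'(A/t)g(H)$. Integrating $D\Phi$ over $[1,T]$, using $\|\Phi\|\le C$ and $\int^\infty t^{-2}\,dt<\infty$, yields the propagation estimate
\[ \int_1^\infty \frac{dt}{t}\,\big\langle\psi_t,\, g(H)F'(A/t)g(H)\psi_t\big\rangle \le C\big(\|\psi\|^2 + \||A|\psi\|^2\big), \]
the $\||A|\psi\|^2$ contribution entering through the initial ($t=1$) value of the observables.

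This controls only the transition region $\{A/t\in\mathrm{supp}\,F'\}$; to pass from $dt/t$--integrability to a genuine power rate on the full region $\{A/t\le\lambda\}$ I would run the standard bootstrap of iterated propagation estimates, feeding the zeroth--order bound into a second observable carrying an extra weight in $t$. The attainable ceiling $m<1$ is exactly the one dictated by the $O(t^{-2})$ size of $R(t)$, i.e. by assuming only $k\le2$ commutators in \eqref{equ2.16}; more bounded commutators would give faster decay. Combining the bootstrap output with the reduction of the first paragraph gives \eqref{equ2.18}.

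I expect the principal obstacle to be the rigorous control of the remainder $R(t)$ and of the functional calculus for $F(A/t)$: giving meaning to $i[H,F(A/t)]$ and its expansion requires the almost--analytic representation of $F$ together with repeated use of \eqref{equ2.16} to bound $\mathrm{ad}_A^{(k)}g(H)$ and of \eqref{equ2.17} to keep every factor on the correct domain, while the $\langle A\rangle$--versus--$\langle x\rangle$ comparison \eqref{equ2.17} is what glues the two localizations together in the reduction. The bootstrap itself is conceptually routine once the one--step estimate is established, but it is where the exponent $m$ must be tracked carefully.
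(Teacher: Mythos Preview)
The paper does not supply its own proof of this lemma: it is quoted verbatim as \cite[Theorem 5.2]{SS} and used as a black box, with only informal remarks following it. There is therefore no ``paper's proof'' to compare against.

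That said, your outline is a faithful sketch of the Sigal--Soffer argument that the citation points to. The three ingredients --- reducing the $|x|/t$ localization to an $A/t$ localization via the $\langle A\rangle$--$\langle x\rangle$ comparison \eqref{equ2.17}, running a propagation observable $\Phi(t)=g(H)F(A/t)g(H)$ whose Heisenberg derivative is positive modulo $O(t^{-2})$ thanks to Mourre \eqref{equ2.15} and the commutator bounds \eqref{equ2.16}, and then bootstrapping the resulting $\int dt/t$ integral estimate to a pointwise $t^{-m}$ rate --- are exactly the machinery of \cite{SS} (and its later refinements in \cite{HSS,Ski}). Your identification of the ceiling $m<1$ with the assumption $k\le 2$ in \eqref{equ2.16} is also correct, as the paper itself notes just before stating the lemma.

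One point to tighten if you write this out in full: in the reduction step you assert that $\{|x|/t<v\}$ and $\{A/t>\lambda\}$ are ``essentially disjoint'' on the energy shell. What \eqref{equ2.17} actually gives is an operator bound $\|\langle A\rangle g(H)\langle x\rangle^{-1}\|\le C$, so the quantitative statement is that $\|F(|x|/t<v)g(H)\bar F_2(A/t>\lambda)\|\le C\langle A\rangle^{-1}\langle x\rangle$--type decay, which you must extract by inserting powers of $\langle A\rangle/t$ and $\langle x\rangle/t$ and using the almost--analytic calculus for $\bar F_2$. This is routine but is the place where \eqref{equ2.17} is genuinely consumed, and the exponent $m<1$ appears here as well (one gets $t^{-1}$ per power of $\langle A\rangle$ traded, limited by $\alpha\le 1$).
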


A few remarks are given in order. First, the condition \eqref{equ2.17} is not hard to verify in applications, see, e.g. \cite{PSS} for the case of Schr\"{o}dinger operators. Next, the estimate \eqref{equ2.18}, can be thought as a quantitative version of the estimate \eqref{equ2.15'},  shows  that if the initial data is localized in the sense that $\psi=E_{\Delta}\psi\in D(|A|$), then the support of the distribution $|e^{-itH}\psi|^2$ is asymptotically contained in the region $|x|\ge t\sqrt{\theta}$, as $t\rightarrow \infty$, up to a remainder of order $t^{-m}$, with any $m<1$.

We proceed with another type of minimal velocity estimates. Before stating the result, we briefly illustrate that the main idea is to decompose the state into outgoing and incoming waves by means of the spectral decomposition of $A$. Such idea was introduced by Enss \cite{En} in order to prove asymptotic completeness, More precisely, we say that a state $\psi$ is outgoing/incoming if $\psi\in \text{Ran} P^{\pm}(A)$, where $P^{\pm}$ denotes the projection on $\mathbb{R}^{\pm}$, see e.g. \cite{Mo82,Jen,RT}. Roughly speaking, the advantage of this decomposition is that outgoing components will evolve towards spatial infinity (and never come back) as $t\rightarrow\infty$, whereas incoming parts will evolve towards spatial infinity as $t\rightarrow-\infty$. In particular, we have the following

\begin{lemma}\cite[Theorem 1.1, 1.2]{HSS}\label{lem2.4}
 Assume $H$ and $A$ satisfy \eqref{equ2.15} and \eqref{equ2.16}. Let $\chi^{\pm}$ be the characteristic function of $\mathbb{R}^{\pm}$. Then for $0<m<1$,
\begin{align}\label{equ2.20}
\|\chi^{-}(A-a-vt)e^{-itH}g(H)\chi^{+}(A-a)\|\leq C(1+|t|)^{-m}
\end{align}
holds for any $g\in C_0^{\infty}(\Delta)$, any $0<v<\sqrt{\theta}$, uniformly in $a\in \mathbb{R}$. In particular, if $H=-\Delta+V$, and $A=\frac12(x\cdot p+p\cdot x)$ satisfy the assumption above,  then
\begin{align}\label{equ2.21}
\|\chi^{-}(|x|^2-2at-vt^2)e^{-itH}g(H)\chi^{+}(A-a)\|\leq C(1+|t|)^{-m}.
\end{align}
\end{lemma}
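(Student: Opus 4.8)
Since Lemma \ref{lem2.4} is quoted from \cite{HSS}, my aim is to recover it by the propagation‑observable (Heisenberg‑derivative) method of Sigal--Soffer. The plan has two parts: first establish the abstract estimate \eqref{equ2.20} in the spectral variable $A$, and then deduce the concrete estimate \eqref{equ2.21} by transferring the $A$‑localization into an $|x|^2$‑localization through the commutator identity $i[H,x^2]=4A$. Throughout I would work with $g\in C_0^\infty(\Delta)$, so that $g(H)=E_\Delta g(H)$ and the Mourre bound \eqref{equ2.15} upgrades to $g(H)\,i[H,A]\,g(H)\ge\theta\,g(H)^2$, while the regularity \eqref{equ2.16} legitimizes all commutator expansions below.

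The first step is to replace the sharp cutoffs by smooth ones and to build the observable. Fix velocities $v<v_1<v_2$ below the Mourre threshold and choose $F\in C^\infty(\R)$ non‑decreasing with $F\equiv 0$ on $(-\infty,v_1]$ and $F\equiv 1$ on $[v_2,\infty)$. Writing $b=(A-a)/t$, the characteristic function $\chi^-(A-a-vt)$ is supported where $b\le v<v_1$, hence dominated by $1-F(b)$, so it suffices to control the ``fast‑region'' observable
\[
\Phi(t)=g(H)\,F(b)\,g(H),
\]
which is uniformly bounded and self‑adjoint. Using $[H,g(H)]=0$, $\partial_t F(b)=-t^{-1}b\,F'(b)$, and the commutator expansion
\[
i[H,F(b)]=t^{-1}F'(b)\,i[H,A]+O(t^{-2}),
\]
whose $O(t^{-2})$ remainder is controlled precisely by the second‑order bound in \eqref{equ2.16}, I obtain (after symmetrizing $\sqrt{F'(b)}$ through $g(H)$)
\[
g(H)\,D\Phi\,g(H)=\tfrac1t\,g(H)\sqrt{F'(b)}\big(i[H,A]-b\big)\sqrt{F'(b)}\,g(H)+O(t^{-2}).
\]
On $\operatorname{supp}F'$ the variable $b$ stays below the threshold while the Mourre estimate forces $i[H,A]$ to exceed it; since $F'\ge 0$, the positive gap makes $D\Phi$ nonnegative up to an absolutely integrable remainder of size $O(t^{-2})$.

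Integrating $\frac{d}{dt}\langle\psi_t,\Phi(t)\psi_t\rangle=\langle\psi_t,D\Phi\,\psi_t\rangle$ for $\psi_t=e^{-itH}g(H)\chi^+(A-a)\psi$ and using boundedness of $\Phi$ first yields the a priori flux bound $\int_1^\infty t^{-1}\|\sqrt{F'(b)}\,g(H)\psi_t\|^2\,dt\le C\|\psi\|^2$, i.e. the mass crossing into the slow region is integrable. To convert this averaged information into the pointwise rate $(1+|t|)^{-m}$ I would run a bootstrap with time‑weighted observables $t^{-\rho}g(H)F(b)g(H)$: the extra term $-\rho\,t^{-\rho-1}F(b)$ produced by $\partial_t$ is absorbed by the positive commutator term, improving the decay at each stage, with the number of admissible iterations limited by the commutator order in \eqref{equ2.16}. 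With only $k\le 2$ the $O(t^{-2})$ errors close the scheme exactly up to any $m<1$, which is the stated range. For the concrete estimate \eqref{equ2.21}, since $V$ and $x^2$ are multiplication operators we have $i[H,x^2]=i[-\Delta,x^2]=4A$, so in the Heisenberg picture $e^{itH}x^2e^{-itH}=x^2+4\int_0^t e^{isH}Ae^{-isH}\,ds$; feeding in the control of the event $A_s\le a+vs$ from \eqref{equ2.20} shows that, outside a remainder of order $t^{-m}$, the propagated state lives where $|x|^2\ge 2at+vt^2$, which is exactly the localization measured by $\chi^-(|x|^2-2at-vt^2)$.

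The main obstacle is making the commutator expansion rigorous with only two commutators and verifying that every remainder is genuinely $O(t^{-2})$, \emph{uniformly in} $a$, and then organizing the bootstrap so that it actually reaches the sharp exponent $m<1$ rather than stalling at a smaller power; the delicacy is concentrated near the transition, where the sharp initial cutoff $\chi^+(A-a)$ meets the moving threshold. The uniformity in $a$ comes for free because $a$ enters only through the translated variable $A-a$, on which the Mourre estimate \eqref{equ2.15} and the regularity bounds \eqref{equ2.16} act identically.
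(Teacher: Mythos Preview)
The paper does not supply its own proof of Lemma~\ref{lem2.4}; the result is quoted verbatim from \cite[Theorems~1.1 and~1.2]{HSS} and used as a black box. So there is no ``paper's proof'' to compare against beyond the original HSS argument, and your sketch is precisely an outline of that argument: build a propagation observable $\Phi(t)=g(H)F((A-a)/t)g(H)$, compute its Heisenberg derivative, use the Mourre bound on $\operatorname{supp}F'$ to force positivity of the leading $t^{-1}$ term, control the remainder by the second commutator hypothesis~\eqref{equ2.16}, and then bootstrap to pointwise decay. The passage from \eqref{equ2.20} to \eqref{equ2.21} via $i[H,x^2]=4A$ and the Duhamel/Heisenberg identity for $x^2(t)$ is also the route taken in \cite{HSS}.

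Two small points are worth tightening. First, your bootstrap paragraph has the observable pointed the wrong way: with $\Phi$ measuring the \emph{fast} region, the weight $t^{-\rho}$ produces a negative $\partial_t$ contribution that competes with, rather than is absorbed by, the positive commutator. The HSS iteration is run on the complementary (slow‑region) observable, or equivalently on a hierarchy of cutoffs $F_\rho$ with nested supports, so that each stage feeds the next; as written your sign bookkeeping would not close. Second, integrating $i[H,x^2]=4A$ against the lower bound $A(s)\ge a+vs$ gives $|x|^2(t)\gtrsim 4at+2vt^2$, not $2at+vt^2$; the discrepancy with the constants in \eqref{equ2.21} is harmless (one simply relabels $a,v$), but you should flag it rather than claim an exact match. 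Neither issue is a genuine gap in the strategy, only in the write‑up.
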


\begin{remark}\label{rmk2.8}
(\romannumeral1) We note that the estimates above are uniform with respect to $a\in \mathbb{R}$. Note that when $a<0$, thus the state $\chi^{+}(A-a)\psi$ contains incoming component. However, the estimate \eqref{equ2.21} indicates that after finite time ($\approx\frac{-a}{v}$), the incoming part turns out to be outgoing.\\
(\romannumeral2) In our applications, we shall further investigate the behavior of the constants in \eqref{equ2.21} and \eqref{equ2.21} when $g$ varies in a suitable way. In particular, we shall prove that (see Corollary) it's also uniform when $g$ is replaced by $g_k(\cdot)=g(\frac{\cdot}{2^k})$, $k=1,2,\ldots$.
\end{remark}

Now we turn to concrete examples. First we consider Schr\"{o}dinger operators $H=-\frac12\Delta+V$. We note that in the next section, we shall work with $H_R=-\frac12\Delta+R^2V(R\cdot)$, $R>0$ via a scaling argument. Hence we make the following assumption on $V_R=R^2V(R\cdot)$. Assume that there are constants $a_k, b_k>0$ ($k=0,1,2$) independent with $R>0$ and $0<a_0<1$ such that
\begin{align}\label{equ2.23}
\begin{cases}
\|(x\cdot\nabla)^k V_Rf\|\leq \frac{a_k}{2}\|\Delta f\|+b_k\|f\|,\,\,\, \text{for any}\,\,R>0,\,\,k=0, 1, 2.\\
-(x\cdot\nabla)V_R\ge 0,\,\,\, \text{for any}\,\,R>0.
\end{cases}
\end{align}

Under this assumption, we have
\begin{corollary}\label{cor2.9}
Let $V_R=R^2V(R\cdot)$ satisfy the condition \eqref{equ2.23} above. Then there exists a constant $C$ uniformly in $R>0, k\ge 1$ and $a\in \mathbb{R}$ such that for $g_k(\cdot)=g(\frac{\cdot}{2^k})$, $k=1,2,\ldots$
\begin{align}\label{equ2.24}
\|\chi^{-}(|x|^2-2at-vt^2)e^{-itH_R}g_k(H_R)\chi^{+}(A-a)\|\leq C(1+|t|)^{-m}.
\end{align}
\end{corollary}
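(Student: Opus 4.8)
The plan is to deduce \eqref{equ2.24} from the single-scale estimate \eqref{equ2.21} of Lemma \ref{lem2.4} by a dyadic rescaling that simultaneously absorbs the energy dilation $g_k(\cdot)=g(\cdot/2^k)$ and the spatial scaling built into $H_R$. The starting point is a scaling covariance. Writing $U_\lambda f=\lambda^{n/2}f(\lambda\,\cdot)$ and choosing $\lambda=2^{k/2}$, a direct computation gives
\begin{align*}
U_\lambda^{-1}H_RU_\lambda=2^{k}H_{R'},\qquad R'=2^{-k/2}R,
\end{align*}
since $U_\lambda^{-1}(-\tfrac12\Delta)U_\lambda=2^{k}(-\tfrac12\Delta)$ and $2^{-k}V_R(\lambda^{-1}x)=(R')^2V(R'x)=V_{R'}(x)$. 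Moreover the dilation generator $A$ commutes with $U_\lambda$, so $U_\lambda^{-1}\chi^{+}(A-a)U_\lambda=\chi^{+}(A-a)$, while $U_\lambda^{-1}|x|^2U_\lambda=2^{-k}|x|^2$, $U_\lambda^{-1}e^{-itH_R}U_\lambda=e^{-it2^{k}H_{R'}}$ and $U_\lambda^{-1}g_k(H_R)U_\lambda=g(H_{R'})$ with $g\in C_0^\infty(\Delta)$ fixed.

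Conjugating the operator in \eqref{equ2.24} by the unitary $U_\lambda$ turns its norm into
\begin{align*}
\|\chi^{-}(2^{-k}|x|^2-2at-vt^2)\,e^{-it2^{k}H_{R'}}g(H_{R'})\chi^{+}(A-a)\|.
\end{align*}
Substituting $s=2^{k}t$ and factoring the positive constant $2^{-k}$ out of the argument of $\chi^{-}$ (to which $\chi^{-}$ is insensitive), this equals
\begin{align*}
\|\chi^{-}(|x|^2-2as-\tilde v\,s^2)\,e^{-isH_{R'}}g(H_{R'})\chi^{+}(A-a)\|,\qquad \tilde v=2^{-k}v.
\end{align*}
Since $\tilde v=2^{-k}v<v<\sqrt{\theta}$ for every $k\ge1$, this is exactly the form controlled by \eqref{equ2.21} for $H_{R'}$ at velocity $\tilde v$ (the velocity gap $\sqrt{\theta}-\tilde v\ge\sqrt{\theta}-v$ is bounded below uniformly in $k$). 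The resulting bound is $C(1+|s|)^{-m}$, and because $|s|=2^{k}|t|\ge|t|$ we get $C(1+|s|)^{-m}\le C(1+|t|)^{-m}$, which is the claim. The parameter $a$ is untouched by the rescaling, so uniformity in $a$ is inherited directly from Lemma \ref{lem2.4}.

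The crux — and the step I expect to be the main obstacle — is that \eqref{equ2.21} must hold with a constant independent of $R'$, for as $R$ runs over $(0,\infty)$ and $k\ge1$, the rescaled parameter $R'=2^{-k/2}R$ exhausts all of $(0,\infty)$; one cannot treat $R'$ as a compact parameter, so the uniformity has to be extracted from the scale-uniform structure of \eqref{equ2.23}. Concretely, I would verify that the hypotheses of Lemma \ref{lem2.4} hold with constants independent of $R'$. The relative bounds in the first line of \eqref{equ2.23} (with bound $a_0<1$) make each $H_{R'}$ self-adjoint on a common domain and, applied to $(x\cdot\nabla)^kV_{R'}$ for $k=1,2$, yield the multiple-commutator bounds \eqref{equ2.16} uniformly, since $i[H_{R'},A]=-\Delta-(x\cdot\nabla)V_{R'}$ and the higher commutators are assembled from the $(x\cdot\nabla)^kV_{R'}$. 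The positivity in the second line is a scale-invariant repulsivity condition (equivalent to $-(x\cdot\nabla)V\ge0$); it forces $i[H_{R'},A]\ge-\Delta$, which together with the uniform relative boundedness of $V_{R'}$ produces a Mourre estimate \eqref{equ2.15} on any fixed compact window $\Delta\subset(0,\infty)$ with $\theta$ and the associated constants independent of $R'$ (repulsivity rules out eigenvalues in $(0,\infty)$ by the virial theorem, leaving the window unconstrained), and \eqref{equ2.17} is checked in the same uniform manner. The delicate point throughout is not any single estimate but controlling them \emph{simultaneously} across the whole orbit $\{H_{R'}:R'>0\}$; once this uniform-in-$R'$ form of \eqref{equ2.21} is established, the rescaling above delivers \eqref{equ2.24} with one constant $C$.
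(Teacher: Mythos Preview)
Your argument is correct, and it takes a genuinely different route from the paper's own proof. The paper does not rescale at all: it keeps the dyadic function $g_k$ and the Hamiltonian $H_R$ as they stand and instead verifies the HSS hypotheses \eqref{equ2.15}--\eqref{equ2.16} \emph{directly} for this two-parameter family. The $k$-uniformity of \eqref{equ2.16} is obtained through the Helffer--Sj\"ostrand representation of $g_k(H_R)$, using that the derivatives $g_k^{(r)}$ are bounded uniformly in $k$ (since $g_k=g(\cdot/2^k)$), together with the uniform resolvent-commutator bound coming from \eqref{equ2.23}; the proof then concludes by re-running the HSS argument with these uniform inputs. Your scaling argument, by contrast, absorbs the dyadic index $k$ into the potential parameter ($R\mapsto R'=2^{-k/2}R$), reducing everything to the single fixed cutoff $g$ and a single uniform-in-$R'$ application of \eqref{equ2.21}. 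The trade-off is that your reduction produces a $k$-dependent velocity $\tilde v=2^{-k}v$; your claim that the constant stays bounded because the gap $\sqrt{\theta}-\tilde v$ is bounded below is correct, but a cleaner way to dispose of this is to note the pointwise monotonicity $\chi^{-}(|x|^2-2as-\tilde v s^2)\le \chi^{-}(|x|^2-2as-v s^2)$, which lets you apply \eqref{equ2.21} at the fixed velocity $v$ and avoids tracking the $v$-dependence altogether. What your approach buys is conceptual transparency: it makes explicit that the $k$-uniformity in \eqref{equ2.24} is nothing more than a consequence of the scale invariance already encoded in the $R$-uniform hypotheses \eqref{equ2.23}. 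The paper's Helffer--Sj\"ostrand route is more robust in that it would survive perturbations that break exact dilation covariance, but for the problem at hand your scaling argument is shorter and arguably more natural.
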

\begin{proof}
We first note that
\begin{align}\label{equ2.25}
i^kad_A^{(l)}(H_R)=-2^{l-1}\Delta+(-x\cdot\nabla)^l V_R,
\end{align}
then it's easy to check that the Mourre's inequality \eqref{equ2.15} is satisfied. In the following, we claim that
\begin{align}\label{equ2.26}
\|ad_A^{(l)}g_k(H_R)\|\leq C, \,\,\text{for}\,\, l=1,2,
\end{align}
where the constant  can be chosen independent with  $R>0$ and  $k=1,2,\ldots$. To show \eqref{equ2.26}, we will make use of the Helffer-Sj\"{o}strand formula (see e.g. \cite[p.24]{Da})
$$
g_k(H_R)=-\frac{1}{\pi}\int_{\mathbb{C}}{\frac{\partial\tilde{g_k}(z)}{\partial\bar{z}}(H_R-z)^{-1}\,L(dz)},
$$
which implies
\begin{align}\label{equ2.27}
[A,\, g_k(H_R)]=-\frac{1}{\pi}\int_{\mathbb{C}}{\frac{\partial\tilde{g_k}(z)}{\partial\bar{z}}[A,\, (H_R-z)^{-1}]\,L(dz)},
\end{align}
where $L(dz)$ denotes the Lebesgue measure on $\mathbb{C}$ and $\tilde{g_k}\in C_0^{\infty}(\mathbb{C})$ is an almost analytic continuation of $g_k$ supported in a small neighborhood of $\text{supp}\,g_k\subset 2^k\Delta$. More explicitly, one can take
$$
\tilde{g_k}(z)=\sum_{r=0}^{N}\frac{g_k^{(r)}(x)(iy)^r}{r!}\cdot \tau(\frac{y}{\langle x\rangle}),\,\,\,\,\, z=x+iy,
$$
where $\tau\in C_0^{\infty}(\mathbb{R})$, $\tau(s)=1$, if $|s|<1$, and  $\tau(s)=0$, if $|s|>2$. Note that
$$
|g_k^{(r)}(x)|\leq C,
$$
which is uniform with $k=1,2,\ldots$. It then follows that
\begin{align}\label{equ2.29}
|\partial\tilde{g_k}(z)|\leq C|\Im z|^N.
\end{align}
Using \eqref{equ2.25} and our assumption on $V_R$, we obtain that
\begin{align}\label{equ2.30}
\|[A,\, (H_R-z)^{-1}]\|&=\|(H_R-z)^{-1}[A,\, H_R](H_R-z)^{-1}\| \nonumber\\
&\leq C|\Im z|^{-1}.
\end{align}
Thus the claim follows by combining \eqref{equ2.29} and \eqref{equ2.30}. Therefore the commutator condition $\eqref{equ2.16}$ is also verified with a uniform upper bound. Now the conclusion is followed by a step-by-step repetition of the proof in \cite{HSS}.
\end{proof}

Next, we consider applications to the fractional Laplacian. Let $H=(-\Delta)^{\frac{s}{2}}$ with domain the Sobolev space $H^{s}(\mathbb{R}^n)$. It's well-known that $\sigma(H)=\sigma_c(H)=[0,\,\infty)$. We have the following
\begin{corollary}\label{cor2.10}
For $s\ge 1$, then estimates \eqref{equ2.18} and \eqref{equ2.24} are valid with $H=H_R=(-\Delta)^{\frac{s}{2}}$.
\end{corollary}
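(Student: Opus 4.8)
The plan is to follow the same template as the proof of Corollary \ref{cor2.9}: taking $A=\frac12(x\cdot p+p\cdot x)$, the generator of dilations, I would verify the three abstract hypotheses --- the Mourre estimate \eqref{equ2.15}, the multiple commutator bounds \eqref{equ2.16}, and the regularity condition \eqref{equ2.17} --- and then invoke Lemma \ref{lem2.3} for \eqref{equ2.18} and the machinery behind Lemma \ref{lem2.4} (exactly as in Corollary \ref{cor2.9}) for the uniform-in-$k$ estimate \eqref{equ2.24}. The decisive simplification here is that $H=(-\Delta)^{s/2}$ is \emph{exactly} covariant under dilations: with $U_\lambda f=\lambda^{n/2}f(\lambda\cdot)$ one has $U_\lambda^{-1}HU_\lambda=\lambda^s H$, and differentiating yields the clean identity
\begin{align*}
i[H,\,A]=sH,
\end{align*}
which renders every commutator explicit. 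Note also that here $H_R=H$ carries no $R$-dependence, so the only uniformity to track is in $k$.

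First I would dispatch \eqref{equ2.15} and \eqref{equ2.16}. On any compact interval $\Delta\subset(0,\infty)$ the identity $i[H,A]=sH$ gives $E_\Delta\, i[H,A]\,E_\Delta=sE_\Delta HE_\Delta\ge (s\inf\Delta)E_\Delta$, so \eqref{equ2.15} holds with $\theta=s\inf\Delta>0$ and $v_{min}=\sqrt\theta$. For the commutator bounds, dilation covariance gives $e^{i\theta A}g(H)e^{-i\theta A}=g(e^{-s\theta}H)$, whence differentiating at $\theta=0$,
\begin{align*}
ad_A\big(g(H)\big)=is\,Hg'(H);
\end{align*}
similarly $ad_A^{(2)}(g(H))$ is a bounded combination of $Hg'(H)$ and $H^2g''(H)$. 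Since $\lambda g'(\lambda)$ and $\lambda^2 g''(\lambda)$ are bounded for $g\in C_0^\infty$, \eqref{equ2.16} follows for every $s>0$. The uniformity in $k$ for $g_k(\cdot)=g(\cdot/2^k)$ is then immediate from scaling: writing $\mu=\lambda/2^k$, one has $\lambda g_k'(\lambda)=\mu g'(\mu)$ and $\lambda^2 g_k''(\lambda)=\mu^2 g''(\mu)$, so the relevant sup-norms do not depend on $k$.

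The main obstacle is the regularity condition \eqref{equ2.17}, and this is precisely where $s\ge 1$ is used. Since $A=x\cdot p-\tfrac{in}{2}$, one has $(1+|A|^2)^{1/2}\lesssim 1+|x\cdot p|$, so by complex interpolation between the trivial case $\alpha=0$ (where \eqref{equ2.17} reduces to $\|(H+i)^{-1}\|\le1$) and $\alpha=1$, it suffices to bound $(x\cdot p)(H+i)^{-1}(1+|x|^2)^{-1/2}$. Writing this as $x\cdot\big[p(H+i)^{-1}\big](1+|x|^2)^{-1/2}$ and commuting $x$ to the right, the analysis reduces to boundedness of the multipliers $p(H+i)^{-1}$ and $|p|^{s-1}(H+i)^{-1}$, the latter arising from $[x,(H+i)^{-1}]=-(H+i)^{-1}[x,H](H+i)^{-1}$ together with $[x_j,H]=is|p|^{s-2}p_j$. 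Both $|p|/(|p|^s+1)$ and $|p|^{s-1}/(|p|^s+1)$ are bounded exactly when $s\ge1$ --- for $s<1$ the first multiplier already diverges at high frequency --- which is the source of the hypothesis. With \eqref{equ2.17} established, Lemma \ref{lem2.3} delivers \eqref{equ2.18} at once.

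Finally, for the Enss-type bound I would repeat the argument of Corollary \ref{cor2.9} essentially verbatim: the $k$-uniform estimates above feed into the scheme of \cite{HSS} to produce the $A$-localized inequality \eqref{equ2.20} uniformly in $a$ and $k$, which is the content of \eqref{equ2.24} in the present dilation setting. One caveat deserves mention: the explicit spatial region $|x|^2-2at-vt^2$ in \eqref{equ2.24} reflects the linear relation between $A$ and $|x|^2$ special to $s=2$; for general $s$ the classical flow gives $|x(t)|^2\approx|x_0|^2+2s|p|^{s-2}At+s^2|p|^{2s-2}t^2$, so the natural conclusion is the $A$-localized estimate \eqref{equ2.20}, from which the appropriate $s$-dependent spatial statement can be read off on each dyadic frequency block $g_k(H)$.
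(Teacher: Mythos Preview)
Your proof is correct and follows essentially the same template as the paper's: the same conjugate operator $A$, the same key identity $i[H,A]=sH$ for the Mourre estimate, and the same reduction of \eqref{equ2.17} to boundedness of the Fourier multipliers $|p|(H+i)^{-1}$ and $|p|^{s-1}(H+i)^{-1}$ (the paper writes $p\cdot x(H+i)^{-1}\langle x\rangle^{-1}=S_1+S_2$ by commuting $x$ past the resolvent, exactly as you do). The only technical variation is that for \eqref{equ2.16} you differentiate the dilation covariance $e^{i\theta A}g(H)e^{-i\theta A}=g(e^{-s\theta}H)$ directly to obtain $ad_A(g(H))=isHg'(H)$, whereas the paper routes through the Helffer--Sj\"ostrand formula and the resolvent commutator $[A,(H-z)^{-1}]=is(H-z)^{-1}H(H-z)^{-1}$ as in Corollary~\ref{cor2.9}; your argument is shorter and makes the $k$-uniformity for $g_k$ immediate. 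Your closing caveat---that the explicit spatial region $|x|^2-2at-vt^2$ in \eqref{equ2.24} is tied to the second-order case and that for general $s$ the natural conclusion is the $A$-localized estimate \eqref{equ2.20}---is a valid point the paper does not address.
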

\begin{proof}
We first observe that
\begin{align}\label{equ2.31}
i[H, A]=sH,\,\,\,\text{where}\,\,\, A=\frac12(x\cdot p+p\cdot x).
\end{align}
For $b>a>0$, it then follows that
\begin{align*}
E_{[a, b]}i[H,\, A]E_{[a, b]}\ge as E_{[a, b]}
\end{align*}
Hence the Mourre's inequality \eqref{equ2.15} is satisfied.

We proceed to verify the commutator estimates \eqref{equ2.16} with $g$ replaced by $g_k(\cdot)=g(\frac{\cdot}{2^k})$, $k=1,2,\ldots$. Similar to the proof in Corollary \ref{cor2.9}, it suffices to check the following
\begin{align}\label{equ2.32}
\|[A,\, (H_R-z)^{-1}]\|\leq C|\Im z|^{-1},\,\,\,\text{for any}\,\,\, R>0,
\end{align}
which in turn follows by observing (note that $H=H_R=(-\Delta)^{\frac{s}{2}}$)
$$
[A,\, (H-z)^{-1}]=is(H-z)^{-1}H(H-z)^{-1}.
$$

We further point out that the estimate  \eqref{equ2.17} is also satisfied for $(-\Delta)^{\frac{s}{2}}$, $s\ge 1$. Indeed, by interpolation, it suffices to prove $\alpha=1$. This follows by using the fact
\begin{align}\label{equ2.33}
\||p|(H+i)^{-1}\|\leq C,\,\,\,\,\,\,s\ge 1
\end{align}
and writing
$$
p\cdot x(H+i)^{-1}(1+|x|^2)^{-\frac12}=S_1+S_2,
$$
where
$$
S_1=\left(p(H+i)^{-1}\right)\cdot\left(x(1+|x|^2)^{-\frac12}\right),
$$
and
$$
S_2=\left(-isp(H+i)^{-1}\right)\cdot\left(p(-\Delta)^{\frac s2-1}(H+i)^{-1}(1+|x|^2)^{-\frac12}\right).
$$
Having checked all the needed conditions, the result then follows by applying Lemma \ref{lem2.3}, Lemma \ref{lem2.4} and combining the proof in Corollary \ref{cor2.9}.
\end{proof}

\section{Sharp observability inequalities for Schr\"{o}dinger type equations}\label{sec3}
\subsection{Observability inequalities}\label{sec3.1}
In this subsection, we shall show how tools established in section \ref{sec2} could be used to recover the initial data by observing the solutions at two different points in time, and each time outside of a ball. More precisely, For Schr\"{o}dinger equations with potentials, we shall prove the following
\begin{theorem}\label{thm3.1}
Let $n\ge 3$, $H=-\Delta+V$ and assume that $V$ satisfies \eqref{equ2.6} and \eqref{equ2.23}. Let $u(x, t)$ be the solution of the following Cauchy problem
\begin{equation}\label{equ3.1}
i\partial_{t}u(x, t) =H u(x, t), \qquad u(x, 0)=u_{0}(x)\in L^2.
\end{equation}
Then there exists some $\sigma>0$, such that for any $r>0$, $t_2>t_1\ge 0$ with $t_2-t_1>r^2$, we have
\begin{align}\label{equ3.2}
\|u_0\|^2\leq C\left(\int_{|x|\ge \frac{r}{\sqrt{T_0}}}{|u(x,t_1)|^2\,dx}+\int_{|x|\ge \frac{\sqrt{T_0}\sigma\cdot(t_2-t_1)}{r}}{|u(x,t_2)|^2\,dx}\right),
\end{align}
where $T_0$  is some large but fixed number and the constant $C$ depends only on the dimension.
\end{theorem}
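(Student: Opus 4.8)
The plan is to combine the uncertainty principle from Lemma \ref{lem2.2} (which controls the initial data at a single time) with the minimal velocity estimates of Corollary \ref{cor2.9} (which track how $e^{-itH}u_0$ escapes to spatial infinity). The starting observation is that by Lemma \ref{lem2.2}, applied at time $t_1$ to the solution $u(\cdot,t_1)=e^{-it_1 H}u_0$, we have for any $R>0$ a fixed $\delta>0$ with
\begin{align*}
\|u_0\|^2=\|u(\cdot,t_1)\|^2\leq C\left(\int_{|x|\ge R}|u(x,t_1)|^2\,dx+\|\chi(H\ge \delta R^{-2})u(\cdot,t_1)\|^2\right).
\end{align*}
Since $\chi(H\ge \delta R^{-2})$ commutes with $e^{-itH}$, the second term equals $\|\chi(H\ge \delta R^{-2})u_0\|^2$. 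Thus it remains to control the high-energy piece $g:=\chi(H\ge \delta R^{-2})u_0$ by its evolution at the later time $t_2$, restricted to the exterior region $|x|\ge \sigma(t_2-t_1)/R$.

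The key step is the minimal velocity estimate. Because $g$ lives in the range of $\chi(H\ge \delta R^{-2})$, after rescaling via $U_R$ and using the relation $U_R^{-1}H_R U_R=R^2 H$ from \eqref{equ2.8}, the relevant spectral support of $H_R$ is bounded below by the fixed constant $\delta$, independent of $R$; this is precisely why a \emph{uniform} minimal velocity $v_{min}=\sqrt{\delta}>0$ is available, and why we must invoke the dyadic-uniform version in Corollary \ref{cor2.9} (the high-energy cutoff is not compactly supported, so one decomposes $\chi(H\ge\delta R^{-2})$ into dyadic pieces $g_k(H_R)$ and uses that the constant $C$ in \eqref{equ2.24} is uniform in $k$ and $R$). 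The estimate \eqref{equ2.24} then says that the incoming component, governed by $\chi^{+}(A-a)$, becomes outgoing after a finite time and satisfies, for $|x|^2\lesssim 2at+vt^2$, a decay bound $C(1+|t|)^{-m}$. The plan is to choose the time $t=t_2-t_1$ large (this is the role of the hypothesis $t_2-t_1>R^2T_0$, which after rescaling becomes $t\gtrsim T_0$ in the $H_R$ picture) so that the contribution of the evolution inside the ball $|x|\le \sigma(t_2-t_1)/R$ is smaller than $\tfrac12\|g\|^2$, giving
\begin{align*}
\|g\|^2\leq 2\int_{|x|\ge \sigma(t_2-t_1)/R}|u(x,t_2)|^2\,dx.
\end{align*}

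The main obstacle is making the exterior radius match the claimed $\sigma(t_2-t_1)/R$ with a \emph{uniform} constant $\sigma$, while the spectral cutoff depends on $R$. This is a two-scale bookkeeping problem: after the scaling $U_R$, lengths are measured in units of $R$ and times in units of $R^2$, so a minimal velocity $\sqrt{\delta}$ in the rescaled variables translates to an exterior radius proportional to $\sqrt{\delta}\,(t_2-t_1)/R$ in the original variables, which is exactly of the form $\sigma(t_2-t_1)/R$. The delicate point is that I must split $u_0$ over \emph{all} energies above $\delta R^{-2}$, not just a compact band, so the dyadic uniformity in Corollary \ref{cor2.9} is essential: one sums the decay estimates $\sum_k C(1+|t|)^{-m}\|g_k(H_R)g\|$ and uses almost-orthogonality of the dyadic pieces to recover $\|g\|$ without losing the uniform constant. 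Ensuring that this sum converges and that the phase-space cutoffs $\chi^{\pm}(A-a)$ can be chosen with $a$ adapted to the ball radius $\sigma(t_2-t_1)/R$ — so that $\{|x|\le \sigma(t_2-t_1)/R\}$ is captured by the incoming-to-outgoing transition region of \eqref{equ2.24} — is where the quantitative care is needed.
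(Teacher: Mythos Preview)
Your overall strategy --- uncertainty principle plus minimal velocity after rescaling to $H_R$ --- is the paper's. But the order of operations hides a genuine gap. You apply Lemma~\ref{lem2.2} first and then try to bound the high-energy piece $g=\chi(H\ge\delta R^{-2})u(\cdot,t_1)$ by the single exterior term $\int_{|x|\ge\sigma(t_2-t_1)/R}|u(x,t_2)|^2\,dx$. That bound is false in general, because $g$ carries only a spectral constraint and no spatial one: for $V=0$, $R=1$, a wave packet placed at $|x|\approx 2|p|(t_2-t_1)$ with momentum $p$ pointing toward the origin has $\|g\|\approx\|u_0\|$ yet sits near the origin at time $t_2$. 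This does not contradict the theorem (the first term $\int_{|x|\ge R}|u(x,t_1)|^2\,dx$ then absorbs everything), but it does break your plan of controlling $\|g\|^2$ by the second term alone. The estimate \eqref{equ2.24} acts on $\chi^{+}(A-a)$, and without spatial information you have no control on $a$.

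The paper avoids this by reversing the order: first split $u(\cdot,t_1)$ as $\chi(|x|\le R)u(\cdot,t_1)+\chi(|x|>R)u(\cdot,t_1)$, absorb the second piece into the first observability term, and thereby reduce (after rescaling) to data with $\operatorname{supp}u_0\subset B(0,1)$; only \emph{then} is the uncertainty principle applied, in the sharpened form \eqref{equ3.6}. Now each dyadic piece $g_k=\varphi(H_R/2^k)u_0$ satisfies $|x|\lesssim 1$ and $|p|\sim 2^{k/2}$, so classically $A=x\cdot p\gtrsim -2^{k/2}$, which is precisely what justifies writing $g_k\approx\chi^{+}(A+2^{k/2})g_k$ and invoking \eqref{equ2.24} with $a=-2^{k/2}$. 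Your phrase ``$a$ adapted to the ball radius'' misses this: the correct $a$ depends on the dyadic energy level and is only available after the spatial localization you omitted. You also skip the commutator step (Lemma~\ref{lemA1}) that moves $\varphi(H_R/2^k)$ across $\chi(|x|\ge\sigma t)$; this, not almost-orthogonality, is what produces the summable errors $\varepsilon 2^{-k/4}$ in \eqref{equ3.9}.
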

\begin{proof}
We first point out that the repulsive conditions in \eqref{equ2.23} ensure that $H=-\Delta+V$ has no positive eigenvalues (see e.g. \cite[Theorem \uppercase\expandafter{\romannumeral13}. 60]{RS}). Since $e^{-itH}$ is unitary, it follows that \eqref{equ3.2} is equivalent to the case $t_1=0$:
\begin{align}\label{equ3.3}
\|u_0\|^2\leq C\left(\int_{|x|\ge \frac{r}{\sqrt{T_0}}}{|u_0|^2\,dx}+\int_{|x|\ge \frac{\sqrt{T_0}\sigma t}{r}}{|u(x,t)|^2\,dx}\right),\,\,\, t>r^2.
\end{align}
In order to prove \eqref{equ3.3}, we note that by using the scaling identity $U_R^{-1}H_RU_R=R^2H$ in \eqref{equ2.8} with $R=\frac{r}{\sqrt{T_0}}>0$, where $T_0$ is some fixed number determined later, it suffices to prove that for $H_R=-\frac12\Delta+V_R$, there exists a uniform constant $C>0$ such that for any $u_0\in L^2$,
\begin{align}\label{equ3.4}
\|u_0\|^2\leq C\left(\int_{|x|\ge 1}{|u_0|^2\,dx}+\int_{|x|\ge \sigma t}{|e^{-itH_R}u_0|^2\,dx}\right),\,\,\, t>T_0.
\end{align}
We proceed to observe that \eqref{equ3.4} can be easily deduced from the following
\begin{align}\label{equ3.5}
\|u_0\|^2\leq C_1\int_{|x|\ge \sigma t}{|e^{-itH_R}u_0|^2\,dx},\,\,\, t>T_0,\,\,\,\text{supp}\,u_0\subset B(0, 1).
\end{align}
Indeed, assuming that \eqref{equ3.5} is true, then for any $u_0\in L^2$, we write $u_0=u_{01}+u_{02}$, where $u_{01}=\chi(|x|\le 1)u_0$, then applying \eqref{equ3.5} to $u_{01}$ and using Minkowski inequality we obtain \eqref{equ3.4} with $C=C_1+1$. The advantage of this reduction is that it allows us to choose initial data localized in the unit ball.

Now we apply the uncertainty principle associated with $H_R$ established in Lemma \ref{lem2.2}. More precisely, if $\text{supp}\,u_0\subset B(0, 1)$, then it follows from \eqref{equ2.9} that there exists some fixed $\delta_1>0$ such that
\begin{align}\label{equ3.6}
\|u_0\|^2\leq C\|\chi(H_R\geq \delta_1)u_0\|^2,\,\,\,\, \text{for any}\,\, R>0.
\end{align}
Thus \eqref{equ3.5} would be followed if we can prove that there exists a uniform constant $\sigma$ doesn't depend on $R$ and $t$, such that
\begin{align}\label{equ3.7}
\|\chi(H_R\geq \delta_1)u_0\|^2\leq C_2\int_{|x|\ge \sigma t}{|e^{-itH_R}u_0|^2\,dx}+\varepsilon\|u_0\|^2,\,\,\,\, t>T_0,
\end{align}
with $C_1\varepsilon<1$.

The role of the uncertainty principle is to make sure that the energy of the initial data has a positive lower bound, which provides the possibility to use the method of  minimal escape velocities. In order to use tools from section \ref{sec2.2}, we break the initial data into a sum of finite energy and write
$$
\chi(H_R\geq \delta_1)u_0=\chi(\delta_1\leq H_R\leq N )u_0+\sum_{k=n_0}^{\infty}\varphi(\frac{H}{2^k})u_0,
$$
where $N=2^{n_0}$ is some large fixed number and $\varphi$ stands for some well-chosen smoothed characteristic function. Hence \eqref{equ3.7} is valid if we can prove
\begin{align}\label{equ3.8}
\|\chi(\delta_1\leq H_R\leq N )u_0\|^2\leq C\|\chi(\delta_1\leq H_R\leq N )\chi(|x|\ge \sigma t)e^{-itH_R}u_0\|^2+\frac{\varepsilon}{2}\|u_0\|^2,\,\,\,\, t>T_0,
\end{align}
and
\begin{align}\label{equ3.9}
\|\varphi(H/2^k)u_0\|^2\leq C\|\varphi(H/2^k)\chi(|x|\ge \sigma t)e^{-itH_R}u_0\|^2+\varepsilon2^{-k/4}\|u_0\|^2,\,\,\,\, t>T_0.
\end{align}

We first investigate \eqref{equ3.8}. Notice that it follows from Corollary \ref{cor2.9} that for any fixed $\sigma<\sqrt{\delta_1}$ and $0<m<1$,
\begin{align}\label{equ3.10}
\|\chi(|x|\le \sigma t)e^{-itH_R}\chi(\delta_1\leq H_R\leq N )u_0\|^2\leq \frac{C}{\langle t\rangle^{2m}}\|\chi(\delta_1\leq H_R\leq N )u_0\|^2,
\end{align}
then choose some fixed $T_0$ large enough, and for $t\ge T_0$, \eqref{equ3.10} implies that
\begin{align}\label{equ3.11}
\|\chi(\delta_1\leq H_R\leq N )u_0\|^2\leq C\|\chi(|x|\ge \sigma t)e^{-itH_R}\chi(\delta_1\leq H_R\leq N)u_0\|^2,\,\,\,\, t>T_0.
\end{align}
Compared to the desired form \eqref{equ3.8}, we must commute the factor $\chi(\delta_1\leq H_R\leq N)$ to the left of the term $|\chi(|x|\ge \sigma t)$. To this end, we now apply Lemma \ref{lemA1} with $A=H_R\chi(H_R)$, $B=\chi(|x|\ge \sigma t)$ and note that
$$
\|[H_R\chi(H_R), \chi(|x|\ge \sigma t)]\|\leq C,
$$
where the constant $C$ is uniform with respect to $R$ and $t$. Thus we have
\begin{align*}
\|\chi(|x|\ge \sigma t)e^{-itH_R}\chi(\delta_1\leq H_R\leq N)u_0\|^2&\leq \|\chi(\delta_1\leq H_R\leq N)\chi(|x|\ge \sigma t)e^{-itH_R}u_0\|^2\\
&+\|[\chi(\delta_1\leq H_R\leq N), \chi(|x|\ge \sigma t)]e^{-itH_R}u_0\|^2\\
&\leq \|\chi(\delta_1\leq H_R\leq N )\chi(|x|\ge \sigma t)e^{-itH_R}u_0\|^2+\frac{\varepsilon}{2}\|u_0\|^2,
\end{align*}
provided $t>T_0$, which proves \eqref{equ3.8}.

We are left to prove \eqref{equ3.9}. We set $g_k=\varphi(H_R/2^k)u_0$, notice that $\text{supp}\, u_0\subset B(0,1)$ and $H_R\sim 2^k$. Furthermore, under our assumption  \eqref{equ2.6} and \eqref{equ2.23} on $V$, we have that $|p|\sim2^{k/2}$, thus classically, in phase space, we have $A\gtrsim -2^{k/2}$, without loss of generality, we write
$$
g_k=\chi^{+}(A+2^{k/2})g_k.
$$
Then apply \eqref{equ2.24} in Corollary \ref{cor2.9} with $a=-2^{k/2}$, we find
\begin{align}\label{equ3.12}
\|\chi^{-}(|x|^2+2^{k/2+1}t-v_kt^2)e^{-itH_R}\varphi(H_R/2^{k})\chi^{+}(A+2^{k/2})\|\leq C(1+|t|)^{-m},\,\,\,v_k\approx 2^{k/2},
\end{align}
which implies, after choosing sufficiently large $t$, that
\begin{align*}
\|\varphi(H/2^k)u_0\|^2&\leq \|\chi^{+}(|x|^2+2^{k/2+1}t-v_kt^2)e^{-itH_R}g_k(H_R)\chi^{+}(A+2^{k/2})u_0\|^2\\
&\leq \|\chi(|x|^2 \ge \sigma^2t^2 )\chi(|x|\ge \sigma t)e^{-itH_R}u_0\|^2+\frac{\varepsilon}{2}\|u_0\|^2,
\end{align*}
where in the last inequality, we have used the simple fact that
$$
v_kt^2-2^{k/2+1}t\ge \sigma^2t^2,\,\,\,\,k\ge n_0.
$$
Then we apply Lemma \ref{lemA1} with $A=H_R\varphi(H_R/2^k)$, $B=\chi(|x|\ge \sigma t)$ and note that
$$
\|[H_R\varphi(H_R/2^k), \chi(|x|\ge \sigma t)]\|\leq Ct^{-1}2^{-\frac k4},
$$
which indicates \eqref{equ3.9} and the proof is complete.
\end{proof}

We mention that the proof in Theorem \ref{thm3.1} can be applied to the fractional Schr\"{o}dinger equations by using Lemma \ref{cor2.5} and Corollary \ref{cor2.10}.
\begin{theorem}\label{thm3.2}
Assume $n\ge 1$, and let $u(x,t)$ be the solution of the following Cauchy problem
\begin{equation}\label{equ3.1}
i\partial_{t}u(x,t) =(-\Delta)^{\frac{s}{2}} u(x,t), \qquad u(x, 0)=u_{0}(x)\in L^2,\,\,\,\, s\ge 1.
\end{equation}
Then there exist some $\sigma>0$ and some large but fixed number  $T_0>0$, such that for any $r>0$, $t_2>t_1\ge 0$ with $t_2-t_1>r^{s}T_0$, we have
\begin{align}\label{equ3.14}
\|u_0\|^2\leq C\left(\int_{|x|\ge r}{|u(x,t_1)|^2\,dx}+\int_{|x|\ge\frac{\sigma(t_2-t_1)}{r^{s-1}}}{|u(x,t_2)|^2\,dx}\right).
\end{align}
\end{theorem}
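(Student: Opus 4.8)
The plan is to rerun the proof of Theorem~\ref{thm3.1} essentially verbatim, replacing the two main tools by their fractional counterparts: Corollary~\ref{cor2.5} in place of the uncertainty principle Lemma~\ref{lem2.2}, and Corollary~\ref{cor2.10} in place of the minimal-velocity Corollary~\ref{cor2.9}. First, since $e^{-it(-\Delta)^{s/2}}$ is unitary, I would reduce \eqref{equ3.14} to the case $t_1=0$, exactly as \eqref{equ3.2} was reduced to \eqref{equ3.3}, so that it suffices to bound $\|u_0\|^2$ by the mass of $u_0$ outside a fixed ball plus the mass of $e^{-itH}u_0$ outside the escape region at a single later time.

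The scaling reduction is the one computation that genuinely differs from Theorem~\ref{thm3.1}, and it is where the exponents $R^{s}$ and $R^{s-1}$ are produced. Using the scaling isometry $U_Rf=R^{n/2}f(R\,\cdot)$ together with the exact homogeneity $U_R^{-1}(-\Delta)^{s/2}U_R=R^{s}(-\Delta)^{s/2}$, I would track the two spatial regions and the time variable at once. Setting $\tau=tR^{-s}$, a direct change of variables sends $\{|x|\ge R\}$ to $\{|x|\ge 1\}$, sends $\{|x|\ge \sigma t/R^{s-1}\}$ to $\{|x|\ge \sigma\tau\}$, and turns the hypothesis $t_2-t_1>R^{s}T_0$ into $\tau>T_0$. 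Hence \eqref{equ3.14} is equivalent to the single scale-invariant estimate
\[
\|u_0\|^2\le C\Big(\int_{|x|\ge 1}|u_0|^2\,dx+\int_{|x|\ge \sigma\tau}|e^{-i\tau H}u_0|^2\,dx\Big),\qquad \tau>T_0,
\]
with $H=(-\Delta)^{s/2}$. Because $H$ is \emph{exactly} homogeneous, there is no $R$-dependent family $H_R$ to control, which is precisely why no effort beyond Theorem~\ref{thm3.1} is needed here.

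From this point the argument is identical to \eqref{equ3.4}--\eqref{equ3.12}. Splitting $u_0=\chi(|x|\le1)u_0+\chi(|x|>1)u_0$ by Minkowski's inequality reduces matters to data supported in $B(0,1)$, and for such data Corollary~\ref{cor2.5} (taken at $R=1$, in its equivalent projection form) furnishes a fixed $\delta_1>0$ with $\|u_0\|^2\le C\|\chi(H\ge\delta_1)u_0\|^2$. I then decompose $\chi(H\ge\delta_1)u_0=\chi(\delta_1\le H\le N)u_0+\sum_{k\ge n_0}\varphi(H/2^{k})u_0$. On the finite-energy window I invoke the minimal-velocity estimate \eqref{equ2.18} of Corollary~\ref{cor2.10}: here the Mourre constant is $\theta=s\delta_1$ (from $i[H,A]=sH$ and $H\ge\delta_1$), so any fixed $\sigma<\sqrt{s\delta_1}$ together with $T_0$ large forces the propagated mass into $\{|x|\ge\sigma\tau\}$ up to a $\langle\tau\rangle^{-2m}$ error, after which the commutator Lemma~\ref{lemA1} moves $\chi(\delta_1\le H\le N)$ past $\chi(|x|\ge\sigma\tau)$. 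Unscaling $\sigma\tau=\sigma t/R^{s-1}$ recovers exactly the escape region claimed in \eqref{equ3.14}.

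The dyadic tail is the only genuinely technical point, and it mirrors \eqref{equ3.9} and \eqref{equ3.12} with the exponent $2^{k/2}$ replaced by $2^{k/s}$. Since $u_0$ is supported in $B(0,1)$ while $\varphi(H/2^{k})$ localizes to $H\sim2^{k}$, i.e.\ $|\xi|\sim2^{k/s}$, the dilation obeys $A\gtrsim-2^{k/s}$ on this piece, so I would write $g_k=\chi^{+}(A+2^{k/s})g_k$ and apply the uniform-in-$k$ estimate \eqref{equ2.24} with $a=-2^{k/s}$; expanding $\chi^{-}(|x|^2+2^{k/s+1}\tau-v\tau^2)$ and using $v\tau^2-2^{k/s+1}\tau\ge\sigma^2\tau^2$ confines the mass to $\{|x|\ge\sigma\tau\}$, while the uniform commutator bound $\|[H\varphi(H/2^{k}),\chi(|x|\ge\sigma\tau)]\|\lesssim \tau^{-1}2^{-k/4}$ yields the summable error $\varepsilon2^{-k/4}$. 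The main obstacle I anticipate is bookkeeping rather than conceptual: confirming that $\delta_1,\sigma,T_0$ and all commutator and minimal-velocity constants are uniform in the dyadic index $k$ so that $\sum_k$ converges, and that the threshold $v\tau^2-2^{k/s+1}\tau\ge\sigma^2\tau^2$ holds for a single $T_0$ independent of $k$. The latter is automatic once one takes $v$ comparable to the shell Mourre constant $\theta_k\sim 2^{k}$, so that $v\tau^2$ dominates the $k$-dependent shift $2^{k/s+1}\tau$ for all $\tau>T_0$; both uniformities are supplied by Corollary~\ref{cor2.10}, exactly as in Theorem~\ref{thm3.1}.
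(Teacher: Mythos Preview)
Your proposal is correct and follows exactly the approach the paper takes: the paper's proof of Theorem~\ref{thm3.2} is the single sentence ``with no additional effort, the proof in Theorem~\ref{thm3.1} can be applied to the fractional Schr\"odinger equations by using Corollary~\ref{cor2.5} and Corollary~\ref{cor2.10},'' and you have spelled out precisely this, including the homogeneity $U_R^{-1}(-\Delta)^{s/2}U_R=R^{s}(-\Delta)^{s/2}$ that produces the exponents $R^{s}$ and $R^{s-1}$, and the replacement of $2^{k/2}$ by $2^{k/s}$ in the dyadic tail.
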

\begin{remark}\label{rmk3.1}

The above proof extends to the case where for a general $H$, one can construct $\tilde A$
satisfying $i [H,\tilde A]=H$, as well as regularity assumptions as before.
Furthermore, we require that the principal symbol of $\tilde A$ will be the same as that of $A$.
Such $\tilde A$ were constructed for large class of potentials, without the repulsive assumption on $V$.
See \cite{GLS,LS}.
\end{remark}

\subsection{Sharpness of the observability inequalities}\label{sec3.2}
The purpose of this subsection is to show the optimality of the inequalities established in section \ref{sec3.1}.  We recall that it was observed in \cite{WWZ} that for the free Schr\"{o}dinger equation, the observability inequality can't be replaced by
\begin{align}\label{equ3.15}
\int_{\mathbb{R}^n}|u_0|^2\,dx\leq C\left(\int_{|x|\ge r_1}|e^{it_1\Delta}u_0|^2\,dx+\int_{|x|\le r_2}|e^{it_2\Delta}u_0|^2\,dx\right), \,\,u_0\in L^2(\mathbb{R}^n)
\end{align}
for any fixed $r_1, r_2>0$ and $t_2>t_1\ge 0$. In other words, we can't expect to recover the solution by observing it at two different points in time, one point outside a ball while the other inside a ball with any fixed radius. However, since the argument in \cite{WWZ} again relies heavily on the representation formula \eqref{equ1.3} for the solution $e^{it\Delta}u_0$, thus it doesn't apply to other situations. We shall point out that by using {\bf minimal velocity estimates}, one can treat more general cases.

\begin{theorem}\label{thm3.3}
Let $H=-\Delta+V$ satisfy the assumption in Theorem \ref{thm3.1}. Then one can find a sequence of $L^2$ functions $\{f_k\}_{k\in\mathbb{Z}}$ with
\begin{align}\label{equ3.16}
\int_{\mathbb{R}^n}|f_k|^2\,dx=1,
\end{align}
moreover, there exist some $\sigma>0$ and some large enough $T>0$, such that for any $t>T$ and any fixed $r_1>0$,
\begin{align}\label{equ3.17}
\lim_{k\rightarrow \infty}\int_{|x|\ge r_1}|f_k|^2\,dx=\lim_{k\rightarrow \infty}\int_{|x|\le \sigma t}|e^{itH}f_k|^2\,dx=0.
\end{align}
\end{theorem}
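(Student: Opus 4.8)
The plan is to construct the sequence $\{f_k\}$ explicitly so that it exhibits two competing phenomena: its spatial mass escapes to infinity (killing the first integral) while its energy simultaneously diverges to infinity (so that by minimal velocity estimates the evolved state moves out faster than $\sigma t$, killing the second integral). Concretely, I would take $f_k$ to be a normalized wave packet localized in the energy shell $H \sim 2^k$ (for instance $f_k = \varphi(H/2^k)\psi_k$ suitably normalized, with $\psi_k$ a bump in physical space pushed out to a radius growing with $k$), so that the minimal escape velocity $\sqrt{\theta}$ associated to the Mourre estimate \eqref{equ2.15} grows like $2^{k/2}$.

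The first step is to arrange that $\int_{|x|\ge r_1}|f_k|^2\,dx \to 0$. This is a pure spatial-concentration requirement on the initial data: I would center each $f_k$ inside the ball $B(0,r_1)$ (or send its center to the origin) while still keeping it inside the high-energy spectral subspace $\operatorname{Ran}\chi(H\ge \delta)$. The tension here is that concentrating in physical space forces spreading in frequency, but since we want \emph{high} energy anyway, this is consistent: choose $f_k$ supported (essentially) in $B(0,r_1)$ and with $H$-energy of order $2^k$. Normalization \eqref{equ3.16} is then just a matter of dividing by the $L^2$ norm.

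The second step, and the substantive one, is to show $\int_{|x|\le \sigma t}|e^{itH}f_k|^2\,dx \to 0$ as $k\to\infty$ for each fixed $t>T$. Here I would invoke the minimal velocity estimate from Corollary \ref{cor2.9} (or Lemma \ref{lem2.3}) applied on the dyadic energy shell $H_R \sim 2^k$, where the escape velocity $v_{min}\sim 2^{k/2}$. The key point is that for $f_k$ in the shell $H\sim 2^k$, the evolved packet $e^{itH}f_k$ is concentrated in $|x|\gtrsim v_{min}\,t \sim 2^{k/2}t$, up to a remainder of order $t^{-m}$ by \eqref{equ2.18}. Since $2^{k/2}t \to \infty$ as $k\to\infty$ while $\sigma t$ stays fixed, the mass inside $|x|\le \sigma t$ is eventually controlled entirely by the $t^{-m}$ remainder term together with the vanishing overlap of the supports; a decomposition into outgoing components via $\chi^+(A-a)$ (as in the proof of Theorem \ref{thm3.1}) handles the incoming part. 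The main obstacle I anticipate is the uniformity of the minimal velocity constant $C$ across the dyadic scales $k$: one must ensure the bound in \eqref{equ2.24} does not degrade as $k\to\infty$. This is precisely what Corollary \ref{cor2.9} guarantees (the constant is uniform in $k$ for $g_k(\cdot)=g(\cdot/2^k)$), so the argument reduces to quoting that uniformity and then letting $k\to\infty$ to send both the escape radius $2^{k/2}t$ past $\sigma t$ and simultaneously drive the spatial tail of $f_k$ to zero.
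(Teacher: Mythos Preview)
Your second step has a real gap. You correctly observe that Corollary~\ref{cor2.9} gives a constant \emph{uniform} in $k$, but uniformity is the wrong direction: for \eqref{equ3.17} you must show that $\int_{|x|\le\sigma t}|e^{itH}f_k|^2\,dx\to 0$ as $k\to\infty$ for each \emph{fixed} $t>T$. Your argument only produces
\[
\limsup_{k\to\infty}\int_{|x|\le\sigma t}|e^{itH}f_k|^2\,dx\le C(1+t)^{-2m},
\]
a fixed positive number. The fact that the escape radius $2^{k/2}t$ eventually exceeds $\sigma t$ merely places the region $\{|x|\le\sigma t\}$ inside the set where the minimal-velocity remainder lives; it does not make that remainder smaller. (Invoking Lemma~\ref{lem2.3} instead is worse: the factor $\||A|f_k\|$ on the right of \eqref{equ2.18} grows like $2^{k/2}$ and destroys the bound.) Also, a slip in your first paragraph: for $\int_{|x|\ge r_1}|f_k|^2\to 0$ the spatial mass must \emph{concentrate at the origin}, not ``escape to infinity''.

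The paper supplies the missing idea: scaling. One takes $f_k=k^{n/2}f(k\cdot)=U_kf$ for a fixed $f\in C_0^\infty(B(0,1))$; then the dilation relation \eqref{equ2.8} converts $e^{itH}f_k$ into $U_k e^{i\tilde t H_R}f$ with $R=1/k$ and effective time $\tilde t=k^2t$. Now the minimal velocity estimate (Lemma~\ref{lem2.3}, applied to the fixed Schwartz function $f$ and the rescaled Hamiltonian $H_R$) yields decay of order $(1+\tilde t)^{-m}=(1+k^2t)^{-m}$, which \emph{does} vanish as $k\to\infty$. The low-energy piece $\chi(H\le 1)f_k$ is treated separately via the uncertainty bound \eqref{equ2.10'}, giving $\|\chi(H\le 1)f_k\|\le Ck^{-1/2}\to 0$. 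In short, the point you are missing is to trade spatial concentration of the initial data for large effective time through dilation, so that the $t^{-m}$ decay actually bites.
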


\begin{proof}
Choose $f\in C_0^{\infty}(\mathbb{R}^n)$ such that $\|f\|_{L^2}=1$ and $\text{supp}\, f\subset B(0, 1)$. Then we set $f_k=U_kf=k^{\frac n2}f(k\cdot)$, $k=1,2,\cdots$, where $U_k$ is the scaling operator in \eqref{equ2.8}. Since $U_k$ is an isometry on $L^2(\mathbb{R}^n)$, \eqref{equ3.16} follows immediately. Moreover, a scaling argument shows that for any fixed $r_1>0$,
\begin{align}\label{equ3.18}
\lim_{k\rightarrow \infty}\int_{|x|\ge r_1}|f_k|^2\,dx=\lim_{k\rightarrow \infty}\int_{|x|\ge kr_1}|f|^2\,dx=0.
\end{align}
Hence it suffices to prove the $L^2$ norm $\|\chi(|x|\le \sigma t)e^{itH}f_k\|$ goes to zero as $k\rightarrow \infty$. To this end, we write
\begin{align}\label{equ3.19}
\|\chi(|x|\le \sigma t)e^{itH}f_k\|\leq \|\chi(|x|\le \sigma t)e^{itH}\chi(H\le 1)f_k\|+\|\chi(|x|\le \sigma t)e^{itH}\chi(H\ge 1)f_k\|.
\end{align}
On one hand, it follows from \eqref{equ2.9}, \eqref{equ2.10'} and the fact $f_k=\chi(|x|\le \frac1k)f_k$ that
\begin{align}\label{equ3.20}
\|\chi(H\le 1)\chi(|x|\le \frac1k)f_k\|\leq C_H\cdot k^{-\frac12}\rightarrow 0,\,\,\,\text{as}\,\, k\rightarrow \infty.
\end{align}
On the other hand, we observe that
\begin{align}\label{equ3.21}
\|\chi(|x|\le \sigma t)e^{itH}\chi(H\ge 1)f_k\|=\|\chi(|x|\le \frac{\sigma \tilde{t}}{k})e^{i \tilde{t}H_R}\chi(H\ge \frac {1}{k^2})f\|,\,\,\,\,
\end{align}
where $\tilde{t}=k^2t,\,\, R=\frac1k$. Then by Lemma \ref{lem2.3}, we can choose  $\sigma<1$ small enough  and a uniform constant $C$ such that
\begin{align}\label{equ3.22}
\|\chi(|x|\le \frac{\sigma \tilde{t}}{k})e^{i \tilde{t}H_R}\chi(\frac {1}{k^2}\leq H\leq N)f\|\leq C(1+|\tilde{t}|)^{-m},\,\,\,\,
\end{align}
where $N=2^{n_0}$ for some $n_0\in \mathbb{N}$. Also
\begin{align}\label{equ3.23}
\|\chi(|x|\le \frac{\sigma \tilde{t}}{k})e^{i \tilde{t}H_R}\chi(2^l\leq H\leq 2^{l+1})f\|\leq C(2^l|\tilde{t}|)^{-m},\,\,\,\,l=n_0, n_0+1,\cdots.
\end{align}
Combining \eqref{equ3.22} and \eqref{equ3.23}
\begin{align}\label{equ3.24}
\|\chi(|x|\le \frac{\sigma \tilde{t}}{k})e^{i \tilde{t}H_R}\chi(H\ge \frac {1}{k^2})f\|\leq C|\tilde{t}|^{-m},\,\,\,\,t>T.
\end{align}
Therefore it follows from \eqref{equ3.24}, \eqref{equ3.19}, \eqref{equ3.20} and \eqref{equ3.21} that
\begin{align}\label{equ3.25}
\lim_{k\rightarrow \infty}\|\chi(|x|\le \sigma t)e^{i tH}f_k\|=0,
\end{align}
which completes the proof.
\end{proof}

In the special case where $H=(-\Delta)^{\frac s2}$ ($s\ge 1$), though the solution can't be written as the explicit form like \eqref{equ1.3} when $s\ne2$, we can still write the solution $e^{it(-\Delta)^{\frac s2}}f$ as a oscillatory integral and prove by a integration by parts argument. More precisely, we have

\begin{theorem}\label{thm3.4}
Let  $H=(-\Delta)^{\frac s2}$, $s\ge 1$. Then one can find a sequence of $L^2$ functions $\{f_k\}_{k\in\mathbb{Z}}$ with
\begin{align}\label{equ3.26}
\int_{\mathbb{R}^n}|f_k|^2\,dx=1,
\end{align}
and there exists some $\sigma>0$, such that for any $t>0$ and any fixed $r_1>0$,
\begin{align}\label{equ3.27}
\lim_{k\rightarrow \infty}\int_{|x|\ge r_1}|f_k|^2\,dx=\lim_{k\rightarrow \infty}\int_{|x|\le \sigma t}|e^{it(-\Delta)^{\frac s2}}f_k|^2\,dx=0.
\end{align}
\end{theorem}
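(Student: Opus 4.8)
The plan is to reuse the scaling construction from the proof of Theorem \ref{thm3.3}, but to replace the abstract minimal velocity input (which only produces decay for $t>T$) by an explicit non-stationary phase analysis of the oscillatory integral representing $e^{it(-\Delta)^{s/2}}$; this is precisely what lets us reach the conclusion for \emph{every} $t>0$. Concretely, I would choose $f$ so that its Fourier transform $\hat f\in C_0^\infty(\mathbb R^n)$ is supported in the annulus $\{1/2\le|\eta|\le 2\}$ with $\|f\|_{L^2}=1$ (so $f$ is Schwartz, which is all that is needed). Setting $f_k=U_kf=k^{n/2}f(k\cdot)$ gives $\|f_k\|_{L^2}=1$, and the first limit in \eqref{equ3.27} follows exactly as in \eqref{equ3.18} from the scaling identity $\int_{|x|\ge r_1}|f_k|^2\,dx=\int_{|x|\ge kr_1}|f|^2\,dx\to0$, since $f\in L^2$.

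For the second limit I would write the solution, after the change of variables $\xi=k\eta$, as
\[
e^{it(-\Delta)^{s/2}}f_k(x)=\frac{k^{n/2}}{(2\pi)^n}\int_{\mathbb R^n} e^{\,i(kx\cdot\eta+tk^s|\eta|^s)}\hat f(\eta)\,d\eta.
\]
Introducing the large parameter $\tilde t=tk^{s}$ and the rescaled position $v=x/(tk^{s-1})$, the phase becomes $\tilde t\,\theta_v(\eta)$ with $\theta_v(\eta)=v\cdot\eta+|\eta|^s$. On $\operatorname{supp}\hat f$ one has $|\eta|\ge 1/2$, so $|\nabla_\eta\theta_v|\ge s\,2^{-(s-1)}-|v|$; since $|x|\le\sigma t$ forces $|v|\le\sigma k^{-(s-1)}\le\sigma$ for $s\ge1$, choosing $\sigma<s\,2^{-(s-1)}$ makes $|\nabla_\eta\theta_v|\ge c_0>0$ uniformly in $v$ (equivalently in $x$ with $|x|\le\sigma t$) and in $k$. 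With this uniform lower bound, and since all derivatives of $\theta_v$ are bounded on the annulus while $\hat f\in C_0^\infty$, repeated integration by parts gives $\big|\int e^{i\tilde t\theta_v}\hat f\,d\eta\big|\le C_N\tilde t^{-N}$ for every $N$, with $C_N$ uniform for $|v|\le\sigma$.

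It follows that, for $|x|\le\sigma t$,
\[
\big|e^{it(-\Delta)^{s/2}}f_k(x)\big|\le C_N\,k^{n/2}(tk^{s})^{-N}=C_N\,t^{-N}k^{\,n/2-sN},
\]
and integrating over the ball $\{|x|\le\sigma t\}$, whose volume is $\lesssim t^n$, yields
\[
\big\|\chi(|x|\le\sigma t)\,e^{it(-\Delta)^{s/2}}f_k\big\|_{L^2}^2\le C\,t^{\,n-2N}k^{\,n-2sN}.
\]
For $t>0$ fixed and $N>n/(2s)$ the exponent $n-2sN$ is negative, so the right-hand side tends to $0$ as $k\to\infty$, which is the second limit in \eqref{equ3.27}.

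The main obstacle is making the non-stationary phase estimate genuinely uniform: one must check that the gradient lower bound $c_0$ and the constants $C_N$ do not deteriorate as $k\to\infty$. This is exactly why factoring out $\tilde t=tk^s$ and rescaling to the compact parameter set $|v|\le\sigma$ is essential — it converts the apparent growth of the curvature $\partial^2_\eta(tk^s|\eta|^s)\sim tk^s$ into a harmless large multiplicative factor in front of a fixed, non-degenerate phase $\theta_v$. The remaining work is bookkeeping: tracking the powers of $k$ so that $n-2sN<0$ can be arranged, and confirming that the admissible $\sigma$ depends only on $s$. The endpoint $s=1$ is included, since there $\theta_v(\eta)=v\cdot\eta+|\eta|$ has $|\nabla_\eta\theta_v|\ge 1-|v|\ge1-\sigma>0$ whenever $\sigma<1$.
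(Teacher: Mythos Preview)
Your proof is correct and follows essentially the same approach as the paper: rescale via $f_k=U_kf$, pass to the large parameter $\tilde t=tk^s$, and run a non-stationary phase argument on the resulting oscillatory integral. The only difference is cosmetic but useful: by taking $\hat f\in C_0^\infty$ supported in an annulus (rather than $f\in C_0^\infty(B(0,1))$ as in Theorem~\ref{thm3.3}), you eliminate the low-energy piece and thus bypass the splitting \eqref{equ3.19} and the uncertainty-principle step \eqref{equ3.20} that the paper implicitly relies on, making the argument more self-contained.
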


\begin{proof}
Let $f_k$ be as in Theorem \ref{thm3.3}. By \eqref{equ3.19}, it's enough to prove that
\begin{align}\label{equ3.28}
\lim_{k\rightarrow \infty}\|\chi(|x|\le \sigma t)e^{itH}\chi(H\ge 1)f_k\|=0.
\end{align}
 We first observe that by scaling and the homogeneity of $(-\Delta)^{\frac s2}$
\begin{align}\label{equ3.29}
\|\chi(|x|\le \sigma t)e^{itH}\chi(H\ge 1)f_k\|=\|\chi(|x|\le \frac{\sigma \tilde{t}}{k^{s-1}})e^{i \tilde{t}H}\chi(H\ge \frac {1}{k^s})f\|,\,\,\,\,\tilde{t}=t\cdot k^s.
\end{align}
Then we write
\begin{align}\label{equ3.30}
e^{i \tilde{t}H}\chi(H\ge \frac{1}{k^s})f=\int_{|\xi|\ge\frac1k}{e^{-i( \tilde{t}|\xi|^s-x\cdot\xi)}\hat{f}(\xi)\,d\xi}.
\end{align}
To proceed, we notice that in the region $|x|\le \frac{\sigma \tilde{t}}{k^{s-1}}$ with $\sigma\le \frac12$, a direct computation shows that there exists a uniform constant $C$ such that
\begin{align}\label{equ3.31}
|\nabla_{\xi}\frac{(\tilde{t}|\xi|^s-x\cdot\xi)}{|x|+|\tilde{t}|}|\ge Ck^{1-s}.
\end{align}
Then we obtain after taking integration by parts
\begin{align}\label{equ3.32}
\|\chi(|x|\le \frac{\sigma \tilde{t}}{k^{s-1}})e^{i \tilde{t}H}\chi(H\ge \frac {1}{k^s})f\|\leq C_N|tk|^{-N}\rightarrow 0,\,\,\,\text{as}\,\,k\rightarrow \infty,
\end{align}
which implies \eqref{equ3.28} by \eqref{equ3.29}, hence completes the proof.
\end{proof}

\section{Applications}\label{sec4}
Now we turn to the application. First we mention that the observability inequalities established in Sect. \ref{sec3.1} may also be regarded as a kind of quantitative unique continuation property for the corresponding solutions. In particular, we consider
\begin{equation}\label{equ4.1}
i\partial_{t}u(x, t) =H u(x, t), \qquad u(x, 0)=u_{0}(x)\in L^2(\mathbb{R}^n),\,\,\,\,n\ge 3.
\end{equation}
Based on Theorem \ref{thm3.1}, we can derive the following

\begin{corollary}\label{thm4.1}
Let $u(x, t)$ be the solution of the Cauchy problem \eqref{equ4.1} with $H$ satisfying the assumption in Theorem \ref{thm3.1}.  Moreover, for any $r>0$, if $t>r^2$ and assume that
\begin{align}\label{equ4.2}
\text{supp}\,u_0\subset B(0,r/\sqrt{T_0}),\,\,\, \text{and}\,\,\,\text{supp}\,u(x, t)\subset B(0, \sqrt{T_0}\sigma t/r),
\end{align}
where $\sigma>0$ is some fixed constant and $T_0$ is some large but fixed number (see \eqref{equ3.2}). Then $u(x,t)\equiv 0$.
\end{corollary}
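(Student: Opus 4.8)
The plan is to deduce the statement directly from the observability inequality \eqref{equ3.2} of Theorem \ref{thm3.1}, choosing the two observation times so that the two support hypotheses in \eqref{equ4.2} annihilate the two terms on its right-hand side. Concretely, I would apply \eqref{equ3.2} with $t_1=0$ and $t_2=R^2t$. With this choice the admissibility requirement $t_2-t_1>R^2T_0$ of Theorem \ref{thm3.1} becomes exactly $t>T_0$, which is the hypothesis of the corollary; hence the inequality is available and reads
$$
\|u_0\|^2\leq C\Big(\int_{|x|\ge R}|u_0|^2\,dx+\int_{|x|\ge \frac{\sigma(t_2-t_1)}{R}}|u(x,R^2t)|^2\,dx\Big).
$$

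Next I would check that each integral on the right vanishes. The first support hypothesis, $\operatorname{supp}u_0\subset B(0,R)$, says that $u_0=0$ almost everywhere on $\{|x|\ge R\}$ (the bounding sphere being a null set), so the first integral is $0$. For the second, the region of integration is the complement of the ball of radius $\frac{\sigma(t_2-t_1)}{R}$ — precisely the escape radius produced by the minimal velocity mechanism underlying Theorem \ref{thm3.1} — while the second support hypothesis confines $\operatorname{supp}u(\cdot,R^2t)$ to a ball lying inside this escape ball. Hence $u(\cdot,R^2t)$ vanishes on the region of integration and the second integral is $0$ as well. Feeding these two facts into the displayed inequality yields $\|u_0\|^2\le 0$, so $u_0=0$ in $L^2$.

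Since $e^{-itH}$ is unitary, $u(\cdot,t)=e^{-itH}u_0=0$ for every $t$, which is the desired conclusion $u\equiv0$. I expect the only point requiring care to be the scaling bookkeeping in the second step: one must verify that the radius of the ball containing $\operatorname{supp}u(\cdot,R^2t)$ does not exceed the escape radius $\frac{\sigma(t_2-t_1)}{R}$ appearing in \eqref{equ3.2}, so that the support genuinely sits in the non-integrated region. This is a routine comparison of two radii once $t_1=0$ and $t_2=R^2t$ are fixed; all the analytic substance — the uncertainty principle of Lemma \ref{lem2.2} and the minimal escape velocity estimate of Corollary \ref{cor2.9} — is already packaged inside Theorem \ref{thm3.1}, so beyond this arithmetic the corollary is immediate.
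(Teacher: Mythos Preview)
Your approach is exactly the paper's: its entire proof reads ``The proof follows immediately by combining estimate \eqref{equ3.3} and our assumption \eqref{equ4.2},'' which is precisely your plan of applying \eqref{equ3.2} with $t_1=0$, $t_2=R^2t$, killing both integrals with the two support hypotheses, and concluding via unitarity.

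However, the ``routine comparison of two radii'' you defer at the end is not in fact routine, and you should have carried it out. With $t_1=0$ and $t_2=R^2t$ the escape radius appearing in \eqref{equ3.2} is
\[
\frac{\sigma(t_2-t_1)}{R}=\frac{\sigma R^2 t}{R}=\sigma R t,
\]
whereas the hypothesis \eqref{equ4.2} confines $\operatorname{supp}u(\cdot,R^2t)$ to the ball of radius $\sigma t/R$. The inclusion $B(0,\sigma t/R)\subset B(0,\sigma R t)$ that your argument needs holds only for $R\ge 1$; for $R<1$ the support ball is strictly larger than the escape ball and the second integral need not vanish. This is almost certainly a typo in the statement of the corollary (the intended radius is $\sigma R t$, which matches \eqref{equ3.3} verbatim and makes the proof genuinely immediate), and the paper's one-line proof glosses over it just as you do. But since you explicitly singled this comparison out as the sole point requiring care, you should have computed it rather than declaring it routine.
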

\begin{proof}
The proof follows immediately  by combining estimate \eqref{equ3.3} and our assumption \eqref{equ4.2}.
\end{proof}

Similarly, concerning the fractional Schr\"{o}dinger equations, Theorem \ref{thm3.2} gives
\begin{corollary}\label{thm4.2}
Let $u(x,t)$ be the solution of the Cauchy problem \eqref{equ4.1} with  $H=(-\Delta)^{\frac s2}$, $s\ge 1$. Moreover, for any $r>0$ and $t_2>t_1\ge 0$, if
\begin{align}\label{equ4.3}
\text{supp}\,u(x, t_1)\subset B(0,\, r),\,\,\, \text{and}\,\,\,\text{supp}\,u(x, t_2)\subset B(0,\, \sigma(t_2-t_1)/r^{s-1}),
\end{align}
where $\sigma>0$ is some fixed constant and $t_2-t_1>r^{s}T_0$ for some large but fixed $T_0$ (see \eqref{equ3.14}). Then $u(x, t)\equiv 0$.
\end{corollary}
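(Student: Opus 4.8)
The statement is a direct corollary of the observability inequality \eqref{equ3.14}, in exactly the way Corollary \ref{thm4.1} follows from \eqref{equ3.2}; there is no genuinely new obstacle here, since all the analytic content has already been spent in Theorem \ref{thm3.2}. The plan is therefore simply to substitute the two support hypotheses of \eqref{equ4.3} into \eqref{equ3.14} and read off that its right-hand side vanishes.

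Concretely, I would apply Theorem \ref{thm3.2} with $t_1=0$ and observation time $t_2=R^st$. Because $t>T_0$, the admissibility hypothesis $t_2-t_1=R^st>R^sT_0$ of \eqref{equ3.14} is met, so the inequality
\begin{align*}
\|u_0\|^2\le C\left(\int_{|x|\ge R}|u_0|^2\,dx+\int_{|x|\ge \frac{\sigma(t_2-t_1)}{R^{s-1}}}|u(x,t_2)|^2\,dx\right)
\end{align*}
is at our disposal. The first hypothesis in \eqref{equ4.3}, namely $\operatorname{supp}u_0\subset B(0,R)$, makes the first integral vanish. The second hypothesis localizes $u(\cdot,t_2)$ inside a ball contained in the complement of the observation region $\{|x|\ge \sigma(t_2-t_1)/R^{s-1}\}$ appearing in \eqref{equ3.14}, so the second integral vanishes as well. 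Hence $\|u_0\|^2\le 0$, which forces $u_0\equiv 0$. Finally, since $H=(-\Delta)^{s/2}$ generates the unitary group $e^{-itH}$, the solution of \eqref{equ4.1} is $u(\cdot,t)=e^{-itH}u_0$, and $u_0\equiv 0$ immediately gives $u\equiv 0$, as claimed.

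The one point requiring care — and the single step I would double-check — is the scaling bookkeeping: one must align the rescaling $t_2=R^st$ with the radius $\sigma(t_2-t_1)/R^{s-1}=\sigma R t$ and verify that the hypothesized support ball of $u(\cdot,R^st)$ really sits inside the complement of the observation region. For $R\ge 1$ and $s\ge 1$ the inclusion $B(0,\sigma t/R^{s-1})\subset B(0,\sigma R t)$ is immediate (it amounts to $R^s\ge 1$), so the second integral is indeed zero; once this inclusion is confirmed the conclusion is automatic. All of the real work is upstream, in proving Theorem \ref{thm3.2} from the uncertainty principle of Corollary \ref{cor2.5} together with the minimal velocity estimates of Corollary \ref{cor2.10}, so the present corollary is purely a matter of unwinding definitions.
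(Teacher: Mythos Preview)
Your approach is exactly the paper's: the corollary is stated immediately after Theorem \ref{thm3.2} with no separate proof (the companion Corollary \ref{thm4.1} is proved by the one-line ``combine \eqref{equ3.3} with the hypothesis''), and you reproduce precisely that argument by plugging $t_1=0$, $t_2=R^st$ into \eqref{equ3.14}.

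The radius mismatch you flag is a genuine issue, and it is worth being explicit about it rather than sweeping it under $R\ge 1$. With $t_2-t_1=R^st$, the observation region in \eqref{equ3.14} is $\{|x|\ge \sigma(t_2-t_1)/R^{s-1}\}=\{|x|\ge \sigma R t\}$, whereas the hypothesis \eqref{equ4.3} localizes $u(\cdot,R^st)$ in $B(0,\sigma t/R^{s-1})$. These two radii coincide only at $R=1$; for $R<1$ the stated support ball is \emph{larger} than the complement of the observation region, so the second integral need not vanish and the argument does not close. The same discrepancy is already present in Corollary \ref{thm4.1} (compare $\sigma t/R$ in \eqref{equ4.2} with the radius $\sigma R t$ one gets from \eqref{equ3.3} at time $R^2t$). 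This looks like a typo in the paper's formulation of the corollaries rather than a defect in your method: the clean statement that actually follows from \eqref{equ3.14} is ``$\text{supp}\,u_0\subset B(0,R)$ and $\text{supp}\,u(\cdot,\tau)\subset B(0,\sigma\tau/R^{s-1})$ for some $\tau>R^sT_0$ imply $u\equiv 0$'', with no reparametrization of the time variable. Your argument proves exactly this, and that is all the paper's own proof establishes as well.
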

\begin{remark}\label{rmk4.3}
(\romannumeral1) We comment on previous related results on unique continuation. For Schr\"{o}dinger equations with potentials $V(x,t)\in L^{(n+2)/2}(\mathbb{R}^{n+1})$, Kenig and Sogge \cite{KS} observed that if the solution vanishes in some half space of $\mathbb{R}^{n+1}$, then it vanishes identically. The key in their approach is to establish a "uniform Sobolev inequalities" for the operator $i\partial_t+\Delta$, where the elliptic case was found by Kenig, Ruiz and Sogge in \cite{KRS}, we refer to \cite{HS} and references therein for recent progress. Furthermore Ionescu and Kenig studied the following more general case
$$
(i\partial+\Delta)u=Vu+a\cdot\nabla u\,\,\, \text{on}\,\,\mathbb{R}^n\times (0,1).
$$
They proved that no non-trivial solutions can have compact support for two distinct times under suitable assumptions on $V$ and $a$. The main tool in their proof is a carleman inequality, see also \cite{IK04, KPV} and references therein in this direction.

(\romannumeral2) Our approach is quite different from the carleman inequality method mentioned above. To the best of our knowledge, this is the first time ever to discuss the close relationship between {\bf unique continuation} and {\bf minimal velocity inequalities}. We mention that other techniques (such as inverse scattering theory or analytic function approach) can be used to treat certain nonlinear Schr\"{o}dinger equations with special structures (such as integrability or analyticity), and we refer to the work of Zhang \cite{Zhang} and Bourgain \cite{B}.

(\romannumeral3) In view of the Hardy uncertainty principle and the formula \eqref{equ1.3}, stronger uniqueness results are valid by only assuming certain Gaussian type decay of the solution $e^{it\Delta}u_0$ at two different points in time, this was extended to the perturbed case $e^{-itH}$ with $H=-\Delta+V$ in a series of paper by Escauriaza, et al. \cite{EKPV1,EKPV2,EKPV3,EKPV4}. It would be interesting to know whether the commutator methods in our proof could be extended to this situation as well.
\end{remark}
Next, we consider applications to controllability for Schr\"{o}dinger type equations. Based on an abstract lemma \cite[Lemma 5.1]{WWZ} concerning the equivalence between observability and controllability, we can obtain the following result from Theorem \ref{thm3.1}.
\begin{theorem}\label{thm4.3}
Let $H$ satisfy the assumption in Theorem \ref{thm3.1}.  Consider the following impulse controlled Schr\"{o}dinger  equation  for any $r>0$, $\tau_2-\tau_1>r^2$
\begin{align}\label{equ4.4}
\begin{cases}
i\partial_{t}u -H u=\delta_{t=\tau_1}\chi(|x|\ge \frac{r}{\sqrt{T_0}})h_1+\delta_{t=\tau_2}\chi(|x|\ge\frac{\sqrt{T_0}\sigma(\tau_2-\tau_1)}{r})h_2,\,\,\,(x, t)\in \mathbb{R}^n\times (0, T),\\
u(x, 0)=u_{0}\in L^2(\mathbb{R}^n),
\end{cases}
\end{align}
where $\sigma>0$ is some fixed constant and  $T_0$ is large enough (see \eqref{equ3.2}). Denote by $u(\cdot,\cdot, u_0, h_1, h_2)$ the solution to the equation \eqref{equ4.4}. Then for any $u_0, u_T\in L^2$, there exists a pair of controls $(h_1, h_2)\in L^2\times L^2$ such that
\begin{align}\label{equ4.5}
u(x,T, u_0, h_1, h_2)= u_T,
\end{align}
and for some $C>0$
\begin{align}\label{equ4.6}
\|h_1\|^2+\|h_2\|^2\leq C\|u_T-e^{-iTH}u_0\|^2.
\end{align}
\end{theorem}
\begin{proof}
This is a direct consequence of \cite[Lemma 5.1]{WWZ}. We sketch the proof here for the sake of self-containment. Consider the following dual equation
\begin{align}\label{equ4.7}
\begin{cases}
i\partial_{t}\varphi -H\varphi=0,\,\,\,(x, t)\in \mathbb{R}^n\times (0, T),\\
u(x, T)=f\in L^2(\mathbb{R}^n),
\end{cases}
\end{align}
and denote $\varphi(\cdot, \cdot, T, f)$ the solution to \eqref{equ4.7}. Then Theorem \ref{thm3.1}  implies that
\begin{align}\label{equ4.8}
\|f\|^2\leq C\left(\int_{|x|\ge r_1}{|\varphi(\cdot, \tau_1, T, f)|^2\,dx}+\int_{|x|\ge \frac{\sigma(\tau_2-\tau_1)}{r_1}}{|\varphi(\cdot, \tau_2, T, f)|^2\,dx}\right),
\end{align}
provided $\tau_2-\tau_1>r_1^2T_0$. Now we define the state transformation operator $R: L^2\rightarrow L^2$ and the observation operator $O: L^2\rightarrow L^2\times L^2$ as follows:
\begin{align}\label{equ4.9}
Rf=f;\,\,\, Of=\left(\chi(|x|\ge r_1)\varphi(\cdot, \tau_1, T, f),\,\,\, \chi(|x|\ge \sigma(\tau_2-\tau_1)/r_1)\varphi(\cdot, \tau_2, T, f)\right).
\end{align}
Thus by \eqref{equ4.8} and \eqref{equ4.9}, we have for any $f\in L^2$,
\begin{align}\label{equ4.10}
\|Rf\|^2\leq C\|Of\|_{L^2\times L^2}^2+\frac1k\|f\|^2,\,\,\,k\in\mathbb{N}^+.
\end{align}
According to Lemma 5.1 in \cite{WWZ}, there exists a pair $(h_{1k}, h_{2k})\in L^2\times L^2$, $k\in\mathbb{N}^+$ such that the following dual inequality holds
\begin{align}\label{equ4.11}
C\|(h_{1k}, h_{2k})\|_{L^2\times L^2}^2+k\|R^*f-O^*(h_{1k}, h_{2k})\|^2\leq \|f\|^2,\,\,\,k\in\mathbb{N}^+,
\end{align}
where
\begin{align}\label{equ4.12}
R^*f=f;\,\,\, O^*(h_{1k}, h_{2k})=u(\cdot, T, 0, h_{1k}, h_{2k}).
\end{align}
Here the dual operator $O^*$ is viewed as the control operator. Then \eqref{equ4.5} and \eqref{equ4.6} are followed by choosing a weak convergence subsequence in \eqref{equ4.11} and a limiting procedure.
\end{proof}

Similarly, combining Theorem \ref{thm3.2} with Lemma 5.1 in \cite{WWZ}, we obtain the following controllability for fractional Schr\"{o}dinger equations
\begin{theorem}\label{thm4.3}
Let $H=(-\Delta)^{\frac s2}$, $s\ge 1$.  Consider the the following impulse controlled Schr\"{o}dinger equation for any $r>0$
\begin{align}\label{equ4.13}
\begin{cases}
i\partial_{t}u -H u=\delta_{t=\tau_1}\chi(|x|\ge r)h_1+\delta_{t=\tau_2}\chi(|x|\ge\sigma(\tau_2-\tau_1)/r^{s-1})h_2,\,\,\,(x, t)\in \mathbb{R}^n\times (0, T),\\
u(0,x)=u_{0}\in L^2(\mathbb{R}^n),
\end{cases}
\end{align}
where $\sigma>0$ is some fixed constant and $\tau_2-\tau_1>r^sT_0$ for some $T_0$ large enough (see \eqref{equ3.14}). Denote $u(\cdot,\cdot, u_0, h_1, h_2)$ the solution to the equation \eqref{equ4.13}. Then for any $u_0, u_T\in L^2$, there exists a pair of controls $(h_1, h_2)\in L^2\times L^2$ such that
\begin{align}\label{equ4.14}
u(x,T, u_0, h_1, h_2)= u_T,
\end{align}
and
\begin{align}\label{equ4.15}
\|h_1\|^2+\|h_2\|^2\leq C\|u_T-e^{-itH}u_0\|^2.
\end{align}
\end{theorem}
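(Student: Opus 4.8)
The plan is to deduce this controllability statement from the observability inequality of Theorem \ref{thm3.2} via the abstract duality principle \cite[Lemma 5.1]{WWZ}, in exact parallel with the preceding controllability theorem for $H=-\Delta+V$. The only structural change is that the second observation region is now $\{|x|\ge \sigma(\tau_2-\tau_1)/r_1^{s-1}\}$, matching the geometry of \eqref{equ3.14}, while the unitarity of $e^{-itH}$ with $H=(-\Delta)^{s/2}$ ensures that every functional-analytic step carries over verbatim.

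First I would introduce the backward dual problem
\begin{align*}
i\partial_t\varphi-H\varphi=0,\qquad \varphi(\cdot,T)=f\in L^2(\mathbb{R}^n),
\end{align*}
and denote its solution $\varphi(\cdot,\cdot,T,f)$. Applying Theorem \ref{thm3.2} at the two times $\tau_1<\tau_2$, for any radius $r_1>0$ with $\tau_2-\tau_1>r_1^sT_0$, yields the observability estimate
\begin{align*}
\|f\|^2\leq C\left(\int_{|x|\ge r_1}|\varphi(\cdot,\tau_1,T,f)|^2\,dx+\int_{|x|\ge \sigma(\tau_2-\tau_1)/r_1^{s-1}}|\varphi(\cdot,\tau_2,T,f)|^2\,dx\right).
\end{align*}

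Next I would set $Rf=f$ and define the observation operator $O:L^2\to L^2\times L^2$ by
\begin{align*}
Of=\left(\chi(|x|\ge r_1)\varphi(\cdot,\tau_1,T,f),\ \chi(|x|\ge \sigma(\tau_2-\tau_1)/r_1^{s-1})\varphi(\cdot,\tau_2,T,f)\right),
\end{align*}
so that the observability bound reads $\|Rf\|^2\le C\|Of\|_{L^2\times L^2}^2$, and in particular the weakened form $\|Rf\|^2\le C\|Of\|^2+\tfrac1k\|f\|^2$ holds for every $k\in\mathbb{N}^+$. Feeding this into \cite[Lemma 5.1]{WWZ} produces, for each $k$, a control pair $(h_{1k},h_{2k})$ satisfying
\begin{align*}
C\|(h_{1k},h_{2k})\|_{L^2\times L^2}^2+k\|R^*f-O^*(h_{1k},h_{2k})\|^2\leq\|f\|^2,
\end{align*}
where $R^*f=f$ and $O^*(h_{1k},h_{2k})=u(\cdot,T,0,h_{1k},h_{2k})$ is precisely the solution of \eqref{equ4.13} driven from zero initial data by the two impulse controls. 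Extracting a weakly convergent subsequence and letting $k\to\infty$ then yields a control pair $(h_1,h_2)$ achieving $u(x,T,u_0,h_1,h_2)=u_T$ together with the quantitative bound \eqref{equ4.15}, obtained by superposing the free evolution of $u_0$ with the controlled solution started from zero.

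The argument is essentially routine once Theorem \ref{thm3.2} is available, so the real content lies upstream in the observability inequality rather than in this deduction. The one point deserving genuine care is to confirm that the Duhamel representation of the impulse-forced equation \eqref{equ4.13} coincides exactly with the adjoint $O^*$ of the observation operator, i.e. that the forcing $\delta_{t=\tau_j}\chi(\cdots)h_j$ reproduces the correct adjoint action; this is a short computation relying on the unitarity of $e^{-itH}$ and is precisely what the abstract framework of \cite[Lemma 5.1]{WWZ} encapsulates, so no new estimate is needed beyond \eqref{equ3.14}.
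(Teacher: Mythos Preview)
Your proposal is correct and mirrors the paper's approach exactly: the paper does not even write out a separate proof for this theorem, merely noting that it follows by combining Theorem~\ref{thm3.2} with \cite[Lemma~5.1]{WWZ} in the same way as the preceding controllability result for $H=-\Delta+V$. Your adaptation of the observation region to $\{|x|\ge \sigma(\tau_2-\tau_1)/r_1^{s-1}\}$ and the time condition $\tau_2-\tau_1>r_1^sT_0$ is precisely the modification the paper intends.
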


\begin{appendix}
\section{Commutator estimates}\label{sec5}
\begin{lemma}\label{lemA1}
Let $A$ and $B$ be two operators on a Hilbert space $\mathfrak{X}$ with $A$ self-adjoint and $B$ bounded. Assume that $D(A)\cap D(B)$ is dense and $[A, B]$ extends to a bounded operator. Further there is a constant $M_{AB}$ such that
$$\|[A, B]\|\leq M_{AB}.$$
Assume $0\leq \varphi\in C_0^{\infty}(\mathbb{R})$ such that $\text{supp} \varphi\subset [\frac12, 2]$, and $\varphi=1$ on $[\frac34, \frac54]$ and denote by $\varphi_N=\varphi(\frac{\cdot}{N})$. Then we have
\begin{align}\label{equA.1}
\|[\varphi_N(A), B]\|\leq CM_{AB}N^{-\frac34}.
\end{align}
\end{lemma}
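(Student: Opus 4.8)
The plan is to represent $\varphi_N(A)$ through the unitary group generated by $A$ and to reduce the estimate to a one-dimensional integral in the Fourier variable. Since $\varphi\in C_0^\infty(\mathbb{R})$, we have $\varphi_N\in\mathcal{S}(\mathbb{R})$ with $\widehat{\varphi_N}\in L^1$, so Fourier inversion together with the spectral theorem gives
\begin{align*}
\varphi_N(A)=\frac{1}{2\pi}\int_{\mathbb{R}}\widehat{\varphi_N}(s)\,e^{isA}\,ds,
\end{align*}
the integral converging in operator norm because each $e^{isA}$ is unitary and $\widehat{\varphi_N}\in L^1$. Commuting with the bounded operator $B$ term by term yields
\begin{align*}
[\varphi_N(A),B]=\frac{1}{2\pi}\int_{\mathbb{R}}\widehat{\varphi_N}(s)\,[e^{isA},B]\,ds.
\end{align*}
The advantage of this route is that it uses only the single commutator hypothesis $\|[A,B]\|\le M_{AB}$ and never the iterated commutators; it is also in the spirit of the Helffer--Sj\"ostrand calculus already used in Corollary \ref{cor2.9}.

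The heart of the matter is the pointwise-in-$s$ bound $\|[e^{isA},B]\|\le |s|\,M_{AB}$, and this is the step I expect to require the most care. I would establish it at the level of matrix elements, using that---because $B$ is bounded, so that $D(A)\cap D(B)=D(A)$ is automatically dense---the commutator $[A,B]$ is naturally a bounded sesquilinear form on $D(A)\times D(A)$, namely $\langle A\phi,B\psi\rangle-\langle B^*\phi,A\psi\rangle$, of norm $\le M_{AB}$; this is the precise meaning of the hypothesis. For $\phi,\psi\in D(A)$ set $g(s)=\langle e^{-isA}\phi,\,B e^{-isA}\psi\rangle$. Since $\phi,\psi\in D(A)$, the map $s\mapsto e^{-isA}\phi$ is strongly $C^1$, and differentiating gives $g'(s)=i\big(\langle A\Phi,B\Psi\rangle-\langle B^*\Phi,A\Psi\rangle\big)$ with $\Phi=e^{-isA}\phi$, $\Psi=e^{-isA}\psi\in D(A)$, hence $|g'(s)|\le M_{AB}\|\phi\|\,\|\psi\|$. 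Integrating and using $g(0)=\langle\phi,B\psi\rangle$,
\begin{align*}
\big|\langle\phi,(e^{isA}Be^{-isA}-B)\psi\rangle\big|=|g(s)-g(0)|\le |s|\,M_{AB}\,\|\phi\|\,\|\psi\|.
\end{align*}
As $D(A)$ is dense and $e^{isA}$ is unitary, this gives $\|[e^{isA},B]\|=\|e^{isA}Be^{-isA}-B\|\le |s|\,M_{AB}$. The only delicate point is the form interpretation of the hypothesis; once it is in place, everything is a clean application of the fundamental theorem of calculus.

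It then remains to evaluate the integral by scaling. From $\widehat{\varphi_N}(s)=N\,\widehat{\varphi}(Ns)$ and the substitution $u=Ns$ one gets
\begin{align*}
\int_{\mathbb{R}}|s|\,|\widehat{\varphi_N}(s)|\,ds=\frac{1}{N}\int_{\mathbb{R}}|u|\,|\widehat{\varphi}(u)|\,du,
\end{align*}
where the last integral is finite because $\varphi\in C_0^\infty$ forces $\widehat{\varphi}\in\mathcal{S}(\mathbb{R})$. Combining the displays,
\begin{align*}
\|[\varphi_N(A),B]\|\le\frac{1}{2\pi}\int_{\mathbb{R}}|\widehat{\varphi_N}(s)|\,\|[e^{isA},B]\|\,ds\le\frac{M_{AB}}{2\pi}\int_{\mathbb{R}}|s|\,|\widehat{\varphi_N}(s)|\,ds\le C\,M_{AB}\,N^{-1}.
\end{align*}
This is in fact stronger than the asserted bound: since $N^{-1}\le N^{-3/4}$ for $N\ge 1$ (the regime relevant to the applications, where $N=2^{k}$), estimate \eqref{equA.1} follows a fortiori. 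One could alternatively argue through the Helffer--Sj\"ostrand representation $[\varphi_N(A),B]=-\tfrac1\pi\int_{\mathbb{C}}\partial_{\bar z}\widetilde{\varphi_N}(z)\,(A-z)^{-1}[A,B](A-z)^{-1}\,L(dz)$ and estimate $\int|\partial_{\bar z}\widetilde{\varphi_N}(z)|\,|\Im z|^{-2}\,L(dz)$ using the scaled cutoff $\tau(\Im z/\langle\Re z\rangle)$; this produces the same $N^{-1}$ gain, but I regard the Fourier argument as the cleaner of the two.
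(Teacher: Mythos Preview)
Your proof is correct and follows essentially the same route as the paper's: represent $\varphi_N(A)$ by Fourier inversion, bound $\|[e^{isA},B]\|\le |s|\,M_{AB}$ via the Duhamel identity (the paper writes this as $[e^{-i\lambda A},B]=-ie^{-i\lambda A}\int_0^\lambda e^{i\mu A}[A,B]e^{-i\mu A}\,d\mu$, which is the integrated form of your matrix-element differentiation), and then control $\int_{\mathbb{R}}|s|\,|\widehat{\varphi_N}(s)|\,ds$. The only difference is in this last step: the paper rewrites the integrand as $|\widehat{\psi_N}|$ with $\psi_N=i\varphi_N'$ and invokes the Carlson--Beurling (Bernstein) inequality $\|\widehat{f}\|_{L^1}\le C\|f\|_{L^2}^{1/2}\|f'\|_{L^2}^{1/2}$, while you compute directly by the substitution $u=Ns$ and obtain the sharper decay $N^{-1}$; in fact the paper's own Bernstein computation, carried through with $\|\psi_N\|_{L^2}\sim N^{-1/2}$ and $\|\psi_N'\|_{L^2}\sim N^{-3/2}$, also yields $N^{-1}$, so the stated exponent $-\tfrac34$ appears to be a slip. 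Your scaling argument is the cleaner of the two and makes the detour through Bernstein unnecessary.
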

\begin{proof}
Let  $g(\lambda)$ denote the Fourier transform of $\varphi_N$ and set $\psi_N=i\frac{d}{dx}\varphi_N$. Thus we have $\lambda g(\lambda)=\hat{\psi_N}$. Note that in the sense of quadratic forms on $D(A)\cap D(B)$
$$
[\varphi_N(A), B]=-i\int{g(\lambda)e^{-i\lambda A}(\int_0^{\lambda}e^{i\mu A}[A, B]e^{-i\mu A}\,d\mu)\,d\lambda}.
$$
Hence
\begin{align}\label{equA.2}
\|\left(f,\,[\varphi_N(A), B]g\right)\|&\leq M_{AB}\int_{\mathbb{R}}{|\lambda g(\lambda)|\,d\lambda}\|f\|\cdot\|g\|\nonumber\\
&\leq M_{AB}\|\psi_N\|_{\mathcal{F}L^1}\|f\|\cdot\|g\|.
\end{align}
In order to estimate the norm in \eqref{equA.2}, we apply Bernstein's inequality, i.e., $H^{\alpha}(\mathbb{R}^n)\hookrightarrow \mathcal{F}L^1(\mathbb{R}^n)$, $\alpha>\frac n2$ (see e.g., \cite[Lemma 3.2]{Hie})
\begin{align}\label{equA.3}
\|\psi_N\|_{\mathcal{F}L^1}&\leq C\|\psi_N\|_{L^2}^{\frac12}\cdot\|\frac{d}{dx}\psi_N\|_{L^2}^{\frac12}\nonumber\\
&\leq CN^{-\frac34},
\end{align}
where the constant $C$ doesn't depend on $N$. Therefore \eqref{equA.1} is followed by Combining \eqref{equA.2} and \eqref{equA.3}.
\end{proof}
\end{appendix}

\section*{Acknowledgements}
We are grateful to the anonymous referee for his/her thoughtful comments and efforts towards
improving our manuscript. S. Huang would like to thank C. Kenig for useful discussions about topics on uncertainty principle and unique continuation for Schr\"{o}dinger equations.
Part of this work was done while A. Soffer was a visiting Professor at CCNU (Central China Normal University).  The authors thank the institutions for their hospitality and the support. S. Huang is supported by the National Natural Science Foundation of China No. 11801188 and the Fundamental Research Funds for the Central Universities No. 2018KFYYXJJ041. A. Soffer is partially supported by NSFC grant No.11671163 and NSF grant DMS01600749.

\bibliographystyle{amsplain}

\end{document}